\definecolor{skyblue}{rgb}{0.85,0.85,1}
\newtheorem{prop}{Proposition}
\newtheorem{theorem}{Theorem}
\newtheorem{lemma}{Lemma}
\newtheorem{rem}{Remark}
\newtheorem{hyp}{Hypothesis}
\newcommand{\bbR}{\mathbb{R}}             
\newcommand{\bbZ}{\mathbb{Z}}             
\newcommand{\p}{\partial}				
\newcommand{\pO}{{\partial \Omega}}
\newcommand{\pOt}{{\partial \Omega_t}}
\newcommand{\Hp}{H^{1/2}(\pO)}
\newcommand{\Hm}{H^{-1/2}(\pO)}
\newcommand{\Hh}{\Hp \oplus \Hm}
\DeclareMathOperator{\Mas}{Mas}
\DeclareMathOperator{\codim}{codim}
\DeclareMathOperator{\dv}{div}
\DeclareMathOperator{\tr}{tr}
\DeclareMathOperator{\spn}{span}
\newcommand{\cF}{{\mathcal{F}}}
\newcommand{\cH}{{\mathcal{H}}}
\newcommand{\cL}{{\mathcal{L}}}
\begin{document}

\title{A symplectic perspective on constrained eigenvalue problems}
\author{Graham Cox}\email{gcox@mun.ca}\address{Department of Mathematics and Statistics, Memorial University of Newfoundland, St. John's, NL A1C 5S7, Canada}
\author{Jeremy L. Marzuola}\email{marzuola@email.unc.edu}\address{Dept. of Mathematics, UNC-CH, CB 3250 Phillips Hall, Chapel Hill, NC 27599-3250, USA}

\maketitle
\begin{abstract}
The Maslov index is a powerful tool for computing spectra of selfadjoint, elliptic boundary value problems. This is done by counting intersections of a fixed Lagrangian subspace, which designates the boundary condition, with the set of Cauchy data for the differential operator. We apply this methodology to constrained eigenvalue problems, in which the operator is restricted to a (not necessarily invariant) subspace. The Maslov index is defined and used to compute the Morse index of the constrained operator. We then prove a constrained Morse index theorem, which says that the Morse index of the constrained problem equals the number of constrained conjugate points, counted with multiplicity, and give an application to the nonlinear Schr\"odinger equation.
\end{abstract}

%
%
%

\section{Introduction}
Consider the nonlinear Schr\"odinger equation
\begin{equation}\label{eq:schr}
	-i \frac{\partial \psi}{\partial t} = \Delta \psi + f(|\psi|^2) \psi
\end{equation}
on a bounded domain $\Omega \subset \bbR^n$.
This admits a stationary solution of the form $\psi(x,t) = e^{-i \omega t} \phi(x)$ precisely when $\phi$ solves the nonlinear elliptic equation
\begin{equation}
\label{eq:statschr}
	\Delta \phi + f(\phi^2) \phi + \omega \phi = 0.
\end{equation}
The existence of nontrivial solutions to such equations on bounded domains can be seen as far back as the work of Pohozaev \cite{Poh}. See for instance \cite{CMMT} for a recent generalization to compact manifolds with boundary and a fairly complete history of the problem (though note that the results therein specify power-law nonlinearities: $f(s^2) = s^{p}$ for $1 < p < \frac{4}{d-2}$).

Assuming the existence of a solution $\phi$ to \eqref{eq:statschr}, we can then study perturbative solutions to \eqref{eq:schr} of the form $u(x,t) =  e^{-i \omega t} \big( \phi (x) + e^{\lambda t} w(x) \big)$. Plugging this ansatz into \eqref{eq:schr} and dropping higher-order terms in $w$ yields the system of eigenvalue equations
\begin{align}\label{linschr}
	L_+ u = - \lambda v, \quad L_- v = \lambda u,
\end{align}
where we have written $w = u+iv$ and $L_\pm$ are the operators
\begin{align}
	\label{lminus}
	& L_- = -\Delta - f( \phi^2) - \omega \\
	\label{lplus}
	& L_+ = - \Delta - f(\phi^2) - 2 f' (\phi^2) \phi^2 - \omega .
\end{align}

%

The eigenvalue problem \eqref{linschr} is not selfadjoint, even though $L_+$ and $L_-$ are. If $L_-$ is invertible, this system is equivalent to $L_+ u = - \lambda^2 (L_-)^{-1} u$. However, $L_-$ typically has a one-dimensional kernel generated by the bound state one is studying, since the standing wave equation \eqref{eq:statschr} is just $L_- \phi = 0$. This lack of invertibility can be overcome by restricting the problem to the subspace $(\ker L_-)^\perp \subset L^2(\Omega)$, and so one needs to describe the spectrum of the corresponding constrained $L_+$ operator. It can be shown, for instance, than unstable eigenvalues (namely those with positive real part) exist if the number of negative eigenvalues of $L_+$ constrained to $(\ker L_-)^\perp$ differs from the number of negative eigenvalues of $L_-$.  See the early work of Jones \cite{jones1988instability} and Grillakis \cite{grillakis1988linearized,grillakis1990analysis} for analysis of this statement.  For a modern treatment that captures many of the important ideas, see for instance \cite[Theorem 3.2]{kapitula2012stability}. A thorough discussion of the constrained eigenvalue problem and its role in stability theory can be found in \cite[\S 4.2]{P11} and also in \cite[\S 5.2]{KP13}, particularly Theorem 5.2.11.
  
In certain cases, for instance if $\phi$ is the positive ground state of a constrained minimization problem, the linear stability or instability can be ascertained from a constrained Morse index calculation.  In other settings for instance involving excited states, linear stability criterion are harder to establish and generally are computed numerically.  However, the nature of the such calculations can often be related to the Krein signature, which can also be framed in terms of a constrained eigenvalue problem, see \cite{kapitula2004counting, mackay1987stability}.


Motivated by the above considerations, we are thus interested in describing the spectrum, and in particular the number of negative eigenvalues, of a Schr\"odinger operator $L = -\Delta + V$ on a bounded domain $\Omega$, constrained to act on a closed subspace of $L^2(\Omega)$. In this paper we give a symplectic formulation of this problem, and use it to prove a constrained version of the celebrated Morse--Smale index theorem. We begin by reviewing the symplectic formulation of the unconstrained spectral problem, which first appeared in \cite{DJ11}, and was elaborated on in \cite{CJLS14,CJM14}.

\begin{hyp}
\label{hyp:domain}
$\Omega \subset \bbR^n$ is a bounded domain with Lipschitz boundary, and $V \in L^\infty(\Omega)$.
\end{hyp}

We define the space of Cauchy data for $L$
\begin{align}\label{mudef}
	\mu(\lambda) = \left\{ \left.\left( u, \frac{\p u}{\p \nu} \right)\right|_{\pO} : L u = \lambda u \right\},
\end{align}
where the equation $Lu = \lambda u$ is meant in a weak sense. That is, $D(u,v) = \lambda \left<u,v\right>$ for all $v \in H^1_0(\Omega)$, where $\left<\cdot,\cdot\right>$ is the $L^2$ inner product and $D$ is the bilinear form
\begin{align}\label{Ddef}
	D(u,v) = \int_\Omega \left[ \nabla u \cdot \nabla v + Vuv \right].
\end{align}
 It is known that $\mu(\lambda)$ defines a smooth curve of Lagrangian subspaces in the symplectic Hilbert space $\Hh$.
 
We let $\beta$ be a Lagrangian subspace that encodes the boundary conditions. For simplicity we take $\beta$ to be either
\begin{align}\label{betaD}
	\beta_{\rm D} = \left\{ (0,\phi) : \phi \in \Hm \right\}
\end{align}
or
\begin{align}\label{betaN}
	\beta_{\rm N} = \left\{ (x,0) : x \in \Hp \right\}.
\end{align}
Note that $\mu(\lambda)$ intersects $\beta_{\rm D}$ nontrivially whenever there is a solution to $Lu = \lambda u$ satisfying Dirichlet boundary conditions. Similarly, the subspace $\beta_{\rm N}$ encodes Neumann boundary conditions. 

Let $\cL$ denote the selfadjoint operator corresponding to the bilinear form $D$ in \eqref{Ddef}, with form domain $X = H^1_0(\Omega)$ or $X = H^2(\Omega)$ (for the Dirichlet and Neumann problems, respectively). The subspaces $\mu(\lambda)$ and $\beta$ comprise a Fredholm pair for each value of $\lambda$, so the Maslov index of $\mu$ with respect to $\beta$ is well defined, and satisfies
\begin{align}
	\Mas(\mu(\cdot); \beta) = - n(\cL),
\end{align}
where $n(\cL)$ denotes the number of strictly negative eigenvalues (i.e. the Morse index) of $\cL$.

We now turn to the constrained problem. We first require an assumption on the constrained space where the problem will be formulated. To state this assumption, we let $\gamma\colon H^1(\Omega) \to \Hp$ denote the Sobolev trace map.

\begin{hyp}
\label{hyp:bdry}
$L^2_c(\Omega) \subset L^2(\Omega)$ is a closed subspace such that
\begin{enumerate}
	\item $\gamma \left(H^1(\Omega) \cap L^2_c(\Omega) \right) = \Hp$;
	\item $\overline{H^1_0(\Omega) \cap L^2_c(\Omega)} = L^2_c(\Omega)$;
	\item $L^2_c(\Omega)^\perp$ is continuously embedded in $H^1(\Omega)$.
\end{enumerate}
\end{hyp}

Since $\gamma$ is surjective, it satisfies $\gamma \left(H^1(\Omega)\right) = \Hp$. Part (i) of the hypothesis prevents $L^2_c(\Omega)$ from being too small, and guarantees that the space of Cauchy data is rich enough to fully describe the spectral problem. The density condition (ii) ensures there are enough ``test functions" in $H^1_0(\Omega) \cap L^2_c(\Omega)$ to make sense of the constrained eigenvalue problem. The embedding condition (iii) means that $L^2_c(\Omega)^\perp \subset H^1(\Omega)$, and there is a constant $C>0$ so that
\begin{align}\label{embed}
	\|\phi\|_{H^1(\Omega)} \leq C \|\phi\|_{L^2(\Omega)}
\end{align}
for all $\phi \in L^2_c(\Omega)^\perp$. This condition implies that a weak solution $u$ to the constrained eigenvalue problem satisfies $Lu \in L^2(\Omega)$, hence $u \in H^2_{\rm loc}(\Omega)$. In Section \ref{sec:codim} we show that these conditions are always satisfied when $L^2_c(\Omega)^\perp$ is a finite-dimensional subspace of $H^1(\Omega)$.

Now consider the bilinear form \eqref{Ddef} restricted to $X \cap L^2_c(\Omega)$, where $X$ is the form domain of the unconstrained operator $\cL$. This defines a selfadjoint operator $\cL_c$, with dense domain $D(\cL_c) \subset  L^2_c(\Omega)$. This is the constrained operator whose spectrum we want to compute.
We define the space of Cauchy data for the constrained problem by
\begin{align*}
	\mu_c (\lambda) = \left\{ \left.\left( u, \frac{\p u}{\p \nu} \right)\right|_{\pO} : u \in H^1(\Omega) \cap L^2_c(\Omega) \text{ and } D(u,v) = \lambda \left<u,v\right> \text{ for all } v \in H^1_0(\Omega) \cap L^2_c(\Omega) \right\}.
\end{align*}

\begin{theorem}\label{thm:cMas}
If Hypotheses \ref{hyp:domain} and \ref{hyp:bdry} are satisfied, then $\mu_c(\lambda)$ has a well-defined Maslov index with respect to $\beta$, and there exists $\lambda_\infty < 0$ such that
\[
	n(\cL_c) = - \Mas \left( \mu_c\Big|_{[\lambda_\infty,0]}; \beta \right).
\]
\end{theorem}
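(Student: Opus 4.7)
The plan is to adapt the unconstrained Maslov framework of \cite{DJ11,CJLS14,CJM14} to the constrained setting, using Hypothesis \ref{hyp:bdry} to supply the needed regularity and boundary-value reconstruction. The proof has three parts: (a) verify that $\mu_c(\lambda)$ is a Lagrangian subspace and that $(\mu_c(\lambda),\beta)$ is a Fredholm pair depending analytically on $\lambda$; (b) identify crossings of $\mu_c(\lambda)\cap\beta$ with eigenvalues of $\cL_c$ and show the crossing form is negative definite; (c) exhibit a left endpoint $\lambda_\infty<0$ below which no crossings occur.

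For (a), isotropy follows from Green's identity: for two constrained weak solutions $u_1,u_2$, Hypothesis \ref{hyp:bdry}(iii) gives $Lu_i \in L^2(\Omega)$ and hence $u_i \in H^2_{\mathrm{loc}}(\Omega)$, so the symplectic form on their boundary traces reduces to $D(u_1,u_2)-D(u_2,u_1)=0$ by symmetry of $D$. Maximality is the delicate point: given any $\phi \in \Hp$, one produces a constrained weak solution realizing $\phi$ as its Dirichlet trace via a Lax--Milgram argument on $H^1_0(\Omega)\cap L^2_c(\Omega)$. Here Hypothesis \ref{hyp:bdry}(ii) makes the test space dense enough to characterize the equation weakly, while \ref{hyp:bdry}(i) guarantees that every boundary datum is attained. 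Realizing $\mu_c(\lambda)$ as the graph of the resulting constrained Dirichlet-to-Neumann operator (modulo the finite-dimensional kernel at points of $\sigma(\cL_c)$) then simultaneously delivers the Lagrangian property, the Fredholm property of $(\mu_c(\lambda),\beta_{\rm D})$ and $(\mu_c(\lambda),\beta_{\rm N})$, and analytic dependence on $\lambda$ by standard form-perturbation arguments.

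For (b), $\mu_c(\lambda)\cap\beta_{\rm D}\neq\{0\}$ is equivalent to the existence of a nonzero $u \in H^1_0(\Omega)\cap L^2_c(\Omega)$ with $D(u,v)=\lambda\langle u,v\rangle$ for every $v \in H^1_0(\Omega)\cap L^2_c(\Omega)$, i.e., to $\lambda \in \sigma(\cL_c)$, with intersection dimension equal to the eigenvalue multiplicity (and likewise for $\beta_{\rm N}$). The crossing form is computed as in \cite{CJLS14,CJM14}: differentiating the identity $D(u_\lambda,u_\lambda)=\lambda\|u_\lambda\|^2$ along a smooth family of eigenpairs and using the definition of $\mu_c$, the crossing form reduces to $-\|u\|_{L^2(\Omega)}^2$, which is negative definite on the crossing subspace. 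Hence the Maslov index of $\mu_c$ over any interval with non-crossing endpoints equals $-\sum\dim(\mu_c(\lambda_\ast)\cap\beta)$ summed over interior crossings $\lambda_\ast$.

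For (c), note that $D(u,u) \ge -\|V\|_{L^\infty(\Omega)}\|u\|_{L^2(\Omega)}^2$, so the spectrum of $\cL_c$ lies in $[-\|V\|_{L^\infty(\Omega)},\infty)$; any $\lambda_\infty < -\|V\|_{L^\infty(\Omega)}$ therefore satisfies $\mu_c(\lambda_\infty)\cap\beta = \{0\}$, and summing the negative crossing contributions on $[\lambda_\infty,0]$ (with the standard endpoint convention at $\lambda=0$) yields $\Mas(\mu_c|_{[\lambda_\infty,0]};\beta) = -n(\cL_c)$. The main obstacle, and the technical heart of the argument, is part (a): the constrained Lax--Milgram reconstruction of weak solutions with prescribed boundary trace, together with the verification that the associated constrained Dirichlet-to-Neumann operator is self-adjoint and differs from the unconstrained one by a relatively compact perturbation, leans delicately on all three parts of Hypothesis \ref{hyp:bdry}.
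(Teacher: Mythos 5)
Your overall architecture matches the paper's: isotropy via Green's identity, identification of crossings with eigenvalues of $\cL_c$, a negative-definite crossing form equal to $-\|u\|_{L^2(\Omega)}^2$, and a spectral lower bound $\lambda \geq \inf V$ giving $\lambda_\infty$. Parts (b) and (c) of your sketch are essentially the paper's Lemmas \ref{lem:Fredholm} and \ref{lemma:mono} (though note the Fredholm property is obtained there from an energy estimate $\|u\|_{H^1(\Omega)} \leq C(\|u\|_{L^2(\Omega)} + \|P_\beta^\perp \tr u\|_{\cH})$ plus the Lagrangian identity $(\mu_c+\beta)^\perp = J(\mu_c\cap\beta)$, not from a compact-perturbation comparison of Dirichlet-to-Neumann maps; your relative-compactness claim is plausible but nowhere substantiated and is not needed).

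The genuine gap is in part (a), which you correctly identify as the technical heart but do not actually resolve. Your proposed construction --- a Lax--Milgram/trace-lifting argument producing a constrained weak solution with prescribed Dirichlet trace, and a representation of $\mu_c(\lambda)$ as the graph of a constrained Dirichlet-to-Neumann operator ``modulo the finite-dimensional kernel at points of $\sigma(\cL_c)$'' --- breaks down precisely at the values of $\lambda$ that matter. When $\lambda$ is a Dirichlet eigenvalue of the constrained problem, the form $D - \lambda\langle\cdot,\cdot\rangle$ is not coercive on $H^1_0(\Omega)\cap L^2_c(\Omega)$, the lifting problem is only solvable for boundary data in a finite-codimension subspace, and the Dirichlet-to-Neumann operator is simply undefined; ``modulo the kernel'' is not a construction, and it is exactly at these $\lambda$ that you must verify maximality, Fredholmness, and smoothness in order to run the crossing-form argument. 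The paper's resolution is Lemma \ref{constrainedbdry}: for each $\lambda_0$ one finds $\zeta_0$ arbitrarily close to $0$ such that the constrained Robin-type problem $PLu=\lambda_0 u$, $\frac{\p u}{\p\nu}-\zeta_0 Ru = h$ is invertible (using strict monotonicity of the Robin eigenvalues in $\zeta$), and then in Lemma \ref{lem:muLag} writes $\mu_c(\lambda)$ for all nearby $\lambda$ as the graph $\{v+JA(\lambda)v\}$ of a bounded selfadjoint Robin-to-Robin operator $A(\lambda)$ on the fixed Lagrangian subspace $\rho = \{(f,g): f+\zeta_0 R^{-1}g=0\}$. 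This single device delivers maximality, smoothness in $\lambda$, and (together with Lemma \ref{lem:trace}, which you also elide --- the normal derivative of a constrained weak solution must first be shown to exist in $\Hm$, using Hypothesis \ref{hyp:bdry}(i) to build a right inverse of the constrained trace) everything part (a) requires. Without some substitute for this uniform graph representation near eigenvalues, your proof of (a) does not close.
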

In other words, the Maslov index computes the Morse index of the constrained operator $\cL_c$.

The classical approach to the constrained eigenvalue problem (see \cite{KP13,P11} and references therein) is to relate $n(\cL)$ and $n(\cL_c)$ through the index of a finite-dimensional ``constraint matrix."  
%

\begin{theorem}[\cite{CPV05}]
\label{thm:constraint}
Suppose $L^2_c(\Omega)$ has finite codimension, with $L^2_c(\Omega)^\perp = \operatorname{span} \{\phi_1, \ldots, \phi_m\}$. The constrained and unconstrained Morse indices are related by
\[
	n(\cL) - n(\cL_c) = \lim_{\lambda \to 0^-} n( M(\lambda) ).
\]
where $M(\lambda)$ is the $m\times m$ matrix with entries
\[
	M_{ij} (\lambda) = \left< (\cL -\lambda)^{-1}\phi_i, \phi_j\right>.
\]
\end{theorem}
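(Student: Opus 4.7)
The plan is to prove this via the classical Lagrange multiplier argument combined with a monotone sweep of $\lambda$ from $-\infty$ up to $0^-$, tracking how the inertia of $M(\lambda)$ changes. Because $\cL_c$ is defined by restricting the form $D$ to $X \cap L^2_c(\Omega)$, a Lagrange multiplier argument identifies $\lambda \in \sigma(\cL_c)$ with the existence of a nontrivial $u$ satisfying $(\cL - \lambda)u = \sum_{i=1}^m c_i \phi_i$ together with $\langle u, \phi_j\rangle = 0$ for all $j$. For $\lambda \notin \sigma(\cL)$, inverting gives $u = \sum_i c_i(\cL - \lambda)^{-1}\phi_i$ and the constraints become $M(\lambda)c = 0$; thus such a $\lambda$ is a constrained eigenvalue exactly when $\det M(\lambda) = 0$, with multiplicity $\dim \ker M(\lambda)$.

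Next I would establish monotonicity and asymptotics. Differentiating yields
\[
    M'(\lambda)_{ij} = \langle (\cL - \lambda)^{-2}\phi_i, \phi_j\rangle = \langle(\cL - \lambda)^{-1}\phi_i, (\cL - \lambda)^{-1}\phi_j\rangle,
\]
the Gram matrix of $\{(\cL - \lambda)^{-1}\phi_i\}$ and hence positive semidefinite. Consequently the eigenvalues of $M(\lambda)$ are nondecreasing on every interval missing $\sigma(\cL)$. For $\lambda \ll 0$ the resolvent is a positive operator of norm $O(|\lambda|^{-1})$, so $M(\lambda)$ is positive definite and $n(M(\lambda)) = 0$.

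I would then analyze the only two ways $n(M)$ can jump as $\lambda$ increases. Near $\lambda_k \in \sigma(\cL)$ of multiplicity $m_k$, the spectral expansion gives
\[
    M(\lambda) = \frac{A_k}{\lambda_k - \lambda} + R(\lambda), \qquad A_{k,ij} = \langle P_k\phi_i, \phi_j\rangle,
\]
with $P_k$ the projection onto $\ker(\cL - \lambda_k)$ and $R$ bounded. Setting $r_k := \operatorname{rank} A_k = \dim P_k\spn\{\phi_1,\dots,\phi_m\}$, exactly $r_k$ eigenvalues of $M$ escape to $+\infty$ as $\lambda\to\lambda_k^-$ and return from $-\infty$ as $\lambda\to\lambda_k^+$, so $n(M)$ jumps up by $r_k$. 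By monotonicity, the only other jumps occur at zeros of $\det M$ with $\lambda \notin \sigma(\cL)$, where $n(M)$ decreases by the kernel dimension there.

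Finally, $\lambda_k$ is itself a constrained eigenvalue of multiplicity $m_k - r_k$, since its constrained eigenvectors are exactly $\ker(\cL - \lambda_k) \cap \spn\{\phi_i\}^\perp$. Summing the jumps over $(-\infty,0)$ and using $n(\cL) = \sum_{\lambda_k<0} m_k$ together with $n(\cL_c) = \sum_{\lambda_k<0}(m_k - r_k) + \sum_{\mu_j<0,\ \mu_j\notin\sigma(\cL)} d_{\mu_j}$, all three quantities balance to give $n(M(0^-)) = n(\cL) - n(\cL_c)$. The principal obstacle is the non-generic case in which a constrained eigenvalue coincides with an unconstrained one: there the pole and zero of $\det M$ collide, and one must carry out a two-scale expansion of $M(\lambda)$ near $\lambda_k$ on the invariant subspaces corresponding to $A_k$, rather than invoking the clean residue picture. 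A secondary issue is that the limit $\lambda \to 0^-$ presumes $0 \notin \sigma(\cL)$; otherwise one must factor out the Riesz projection at $0$ before passing to the limit.
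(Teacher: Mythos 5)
The paper does not prove this statement; it is quoted from \cite{CPV05} (see also the surrounding remark about \cite{KP13,P11}), so there is no internal proof to compare against. Your outline is the standard route to this result --- the Lagrange-multiplier reduction to the constraint matrix, strict monotonicity of $M(\lambda)$ from the Gram-matrix identity $M'(\lambda)_{ij}=\langle(\cL-\lambda)^{-1}\phi_i,(\cL-\lambda)^{-1}\phi_j\rangle$, positivity of $M$ for $\lambda\ll0$, and bookkeeping of the jumps of $n(M(\lambda))$ at poles and at zeros of $\det M$ --- and that skeleton is correct. But as written it does not yet prove the theorem, for the reason you yourself flag and then set aside.

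The genuine gap is the collision case. Your balance identity uses two unproved assertions: that the constrained multiplicity at $\lambda_k\in\sigma(\cL)$ equals $m_k-r_k$, and that $n(M)$ jumps up by exactly $r_k$ across $\lambda_k$. Neither holds in general. The constrained eigenspace at $\lambda_k$ is $\{u\perp\operatorname{span}\{\phi_i\}:(\cL-\lambda_k)u\in\operatorname{span}\{\phi_i\}\}$, which contains $\ker(\cL-\lambda_k)\cap\{\phi_i\}^\perp$ (dimension $m_k-r_k$) but may have an excess $e_k>0$ coming from solutions with nonzero multiplier; precisely in that case the Schur complement of $M(\lambda)$ on $\ker A_k$ is singular at $\lambda_k$ and the jump in $n(M)$ is $r_k-e_k$, not $r_k$. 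The two corrections cancel in the final sum, but establishing this requires the two-scale expansion you defer, together with a monotonicity statement for the reduced block (monotonicity of $M$ does not trivially pass to its Schur complement), so the ``principal obstacle'' is not a side remark --- it is the remaining content of the proof. Two smaller points: your reading of the limit $\lambda\to0^-$ is backwards --- the limit is there precisely to handle $0\in\sigma(\cL)$, where $A_0\geq0$ forces the diverging eigenvalues of $M(\lambda)$ to $+\infty$ so that $n(M(\lambda))$ stabilizes even though $M(0)$ is undefined; and the identification of $\sigma(\cL_c)$ with the multiplier problem tacitly uses $\cL_c=P\cL|_{L^2_c(\Omega)}$, which by the Proposition in Section~\ref{sec:cMaslov} requires $L^2_c(\Omega)^\perp$ to embed in the form domain $X$ (for Dirichlet conditions, $\phi_i\in H^1_0(\Omega)$), a hypothesis you should state.
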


\begin{rem}
A similar result appears in \cite{KP13}, with the added assumption that $\ker \cL \subset L^2_c(\Omega)$. This implies $\phi_i \in L^2_c(\Omega)^\perp \subset (\ker \cL)^\perp = \operatorname{ran} \cL$, so $M(0) =\left<\cL^{-1} \phi_i, \phi_j \right>$ is defined and the result simplifies to $n(\cL) - n(\cL_c) = n(M(0))$. The observation in \cite{P11} is that the limit of the Morse index of $M(\lambda)$ still exists without this assumption, even though some eigenvalues may diverge to $\pm\infty$.
\end{rem}

This result allows one to compute $n(\cL_c)$ from the unconstrained Morse index $n(\cL)$ and the constraint matrix $M$. Here we take a different approach, combining Theorem \ref{thm:cMas} with a homotopy argument to compute the constrained Morse index directly, without having to first know the unconstrained index.

To do this we describe what happens when the domain $\Omega$ is shrunk to a point through a smooth one-parameter family $\{\Omega_t\}$. The result is a constrained analog of Smale's Morse index theorem \cite{S65}, relating the Morse index of the operator to the number of conjugate points. Smale's result, which only applies to the Dirichlet problem, was originally proved by variational methods (see also \cite{U73}). A proof using the Maslov index was given in \cite{DJ11} for star-shaped domains, and in \cite{CJM14} for the general case.

We prove a general result to this effect in Section \ref{sec:MM}; for now we just state the simplest case, when Dirichlet boundary conditions are imposed and there is only one constraint function, i.e. $L^2_c(\Omega) = \{\phi\}^\perp$. We say that $t$ is a \emph{constrained conjugate point} for the Dirichlet problem if there exists a nonzero function $u \in H^2(\Omega_t) \cap H^1_0(\Omega_t)$ such that
\[
	\int_{\Omega_t} u\phi = 0, \quad L u = a\phi \text{ on } \Omega_t
\]
for some constant $a$. In other words, $0$ is an eigenvalue for the constrained Dirichlet problem on $\Omega_t$. Let $d(t)$ denote its multiplicity, so that $d(t) > 0$ whenever $t$ is a conjugate time.

The result is particularly simple when we assume that $\Omega_t$ shrinks to a point.

\begin{theorem}\label{thm:Morse}
Let $\{\Omega_t : 0 < t \leq 1\}$ be a smooth, increasing family of domains in $\bbR^n$, with $\Omega_1 = \Omega$. Suppose $L^2_c(\Omega) = \{\phi\}^\perp$ for some $\phi \in H^1(\Omega)$. If $|\Omega_t| \to 0$ as $t\to0$, then
\[
	n(\cL_c) = \sum_{t < 1} d(t).
\]
\end{theorem}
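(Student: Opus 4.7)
The plan is to combine Theorem~\ref{thm:cMas} with a two-parameter homotopy argument in the spirit of \cite{CJM14}, applying the Maslov index to the boundary of the rectangle $R = [t_0, 1] \times [\lambda_\infty, 0]$ in $(t,\lambda)$-space. After choosing a smooth family of diffeomorphisms $\Phi_t : \Omega \to \Omega_t$ and pulling the eigenvalue problem back to the fixed domain $\Omega$, one obtains a smooth family of constrained selfadjoint operators $\cL_c(t)$ and a corresponding two-parameter family of Lagrangian subspaces $\mu_c(t,\lambda) \subset \Hh$. Homotopy invariance of the Maslov index on the simply connected region $R$ then forces the signed sum of the Maslov contributions along the four edges of $\partial R$ to vanish.

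Next I would arrange that two of the four sides contribute nothing. A coercivity estimate for the form $D$ restricted to $H^1_0(\Omega_t) \cap L^2_c(\Omega_t)$, uniform in $t$, produces $\lambda_\infty < 0$ with $\mu_c(t,\lambda_\infty) \cap \beta_{\rm D} = \{0\}$ for every $t$, killing the bottom edge. Since $|\Omega_t| \to 0$ as $t \to 0$, the first Dirichlet eigenvalue of $-\Delta + V$ on $\Omega_t$ diverges to $+\infty$, so by the min-max principle $n(\cL_c(t_0)) \leq n(\cL(t_0)) = 0$ for $t_0$ sufficiently small; Theorem~\ref{thm:cMas} applied on $\Omega_{t_0}$ then shows that the left edge also contributes zero. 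The right edge, at $t=1$, contributes $-n(\cL_c)$ by Theorem~\ref{thm:cMas}.

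The remaining top edge, along $\lambda = 0$, has nontrivial intersection $\mu_c(t,0) \cap \beta_{\rm D}$ precisely at constrained conjugate times $t \in (t_0, 1)$, with multiplicity $d(t)$. The main task is to show that each such crossing is regular with uniform signature, so that the net Maslov contribution equals $\pm \sum_{t<1} d(t)$. For this I would derive a constrained Hadamard-type formula for $\partial_t \lambda_j(t)$ along the eigenvalue branches, expressing it as a boundary integral involving the normal derivative of the eigenfunction and the normal velocity of $\partial \Omega_t$. Monotone growth of $\Omega_t$ together with the definite sign of this integral should force all crossings at $\lambda = 0$ to carry the same sign, namely the sign that, after summing the four edge contributions and invoking homotopy invariance, recovers $n(\cL_c) = \sum_{t<1} d(t)$.

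The main difficulty, I expect, is the crossing-form analysis at $\lambda = 0$ in the presence of the constraint. The constrained eigenvalue equation takes the form $\cL u = \lambda u + a\phi$ with a Lagrange multiplier $a$, and one must argue that the resulting correction to the Hadamard formula does not destroy monotonicity of the eigenvalue branches as the domain grows; equivalently, one needs an explicit computation of the crossing form in terms of the Cauchy data $\mu_c(t,0)$ that shows positive definiteness (or at worst definite signature) at every intersection. A secondary technical issue is verifying that Hypothesis~\ref{hyp:bdry} is preserved under the family $\Omega_t$, so that $\mu_c(t,\lambda)$ remains well-defined and Lagrangian and depends continuously on $t$ throughout $[t_0, 1]$.
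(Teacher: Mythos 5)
Your outline coincides with the paper's proof: a two-parameter homotopy over the rectangle $[t_0,1]\times[\lambda_\infty,0]$, vanishing contributions from the $\lambda=\lambda_\infty$ edge and from the small-domain edge (where $|\Omega_{t_0}|\to 0$ forces $n(\cL_c^{t_0})\le n(\cL^{t_0})=0$), the identification of the $t=1$ edge with $-n(\cL_c)$ via Theorem~\ref{thm:cMas}, and a crossing-form computation along $\lambda=0$. However, the step you correctly flag as ``the main difficulty'' is precisely the one you do not carry out, and it is not automatic: it is the decisive content of the theorem. When you differentiate the constrained equation $L_t u_t = a_t\phi_t$ in $t$ and assemble the crossing form, you do not simply get the unconstrained Hadamard boundary term; you get
\[
	Q(\tr_t u_t) = -2a_t\Big(\big<\phi_t,(\nabla_X\widehat u)\circ\varphi_t\big> + \big<\phi_t',u_t\big>\Big) - \int_{\pOt}\left(\frac{\p \widehat u}{\p \nu_t}\right)^2 (X\cdot\nu_t),
\]
and the Lagrange-multiplier terms have no a priori sign, so monotonicity of the crossings is genuinely at risk. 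Your proposal offers only the hope that the ``correction to the Hadamard formula does not destroy monotonicity,'' which is not an argument.

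The paper's resolution hinges on a specific, non-obvious choice that your write-up never pins down: the constraint on $\Omega_t$ must be taken against the \emph{restriction} $\phi|_{\Omega_t}$, so that after pullback the rescaled constraint function is $\phi_t=\det(D\varphi_t)(\phi\circ\varphi_t)$. With this choice one computes $\phi_t'=\det(D\varphi_t)\,\dv(\phi X)\circ\varphi_t$ and, using the divergence theorem together with $\widehat u|_{\pOt}=0$, obtains $\big<\phi_t,(\nabla_X\widehat u)\circ\varphi_t\big> = -\big<u_t,\phi_t'\big>$; the two multiplier terms then cancel exactly, leaving $Q=-\int_{\pOt}(\p\widehat u/\p\nu_t)^2(X\cdot\nu_t)$, which is negative definite because the family is increasing ($X\cdot\nu_t>0$). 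A different (and superficially natural) transport of the constraint would break this cancellation and the theorem's conclusion could fail. So: right scaffolding, but the load-bearing computation is missing, and without it the proof does not close. (Your secondary concern, that Hypothesis~\ref{hyp:bdry} persists along the family, is handled by the finite-codimension Lemma~\ref{lem:app} applied to $\phi|_{\Omega_t}$ together with the assumed smoothness of $t\mapsto H^1(\Omega)\cap L^2_{c,t}(\Omega)$.)
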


That is, the Morse index of the constrained operator equals the number of constrained conjugate points in $(0,1)$, counting multiplicity. The sum on the right-hand side is well defined because $d(t)$ is only nonzero for a finite set of times.

We conclude in Section \ref{sec:NLS} by giving a formal application of Theorem \ref{thm:Morse} to the ground state solution $\phi$ of the one-dimensional NLS. We find that there is a constrained conjugate point (hence a negative eigenvalue) if and only if the quantity
\[
	\frac{\partial}{\p \omega} \int_{-\infty}^\infty \phi^2
\]
is positive. This is the well-known Vakhitov--Kolokolov condition \cite{VK73}; see also \cite{GSS1}.

\subsection*{Acknowledgments} 
The authors would like to thank Yuri Latushkin and Dmitry Pelinovsky for helpful discussions during the preparation of this manuscript. G.C. was supported by an NSERC Discovery Grant. J.L.M. was supported in part by NSF Applied Math Grant DMS-1312874 and NSF CAREER Grant DMS-1352353.

\section{A finite-dimensional example}

We now give a simple illustration of Theorem \ref{thm:Morse}, by computing the constrained Morse index of $L = -\Delta - C$ on $[-1,1]$, where $C$ is a positive constant. We do this in three different ways: first by direct computation, and then using Theorems \ref{thm:constraint} and \ref{thm:Morse}.

Let $\cL$ denote the differential operator on $[-1,1]$ with Dirichlet boundary conditions, and $\cL_c$ the constrained operator on the space of zero mean functions
\[
	L^2_c(-1,1) = \left\{u \in L^2(-1,1) : \int_{-1}^1 u(x)\,dx = 0 \right\}.
\]
The constrained eigenvalue equation $\cL_c u = \lambda u$ is equivalent to the conditions
\[
	u_{xx} + Cu + \lambda u = \text{constant}, \quad \int_{-1}^1 u(x)\,dx = 0, \quad u(-1) = u(1) = 0.
\]
From the differential equation and the zero mean condition we obtain the general solution
\[
	u(x) = A(\cos \gamma x - \gamma^{-1} \sin\gamma) + B \sin \gamma x
\]
where $\gamma = \sqrt{C+\lambda}$. Imposing the Dirichlet boundary conditions at $x = \pm 1$, we have
\[
	A(\cos \gamma - \gamma^{-1} \sin\gamma) \pm B \sin \gamma = 0,
\]
which implies either $\cos\gamma = \gamma^{-1} \sin\gamma$ or $\sin\gamma=0$. Finally, observing that $\lambda<0$ iff $\gamma < \sqrt C$, we find that the number of negative eigenvalues is
\begin{align}\label{eq:MorseODE}
	n(\cL_c) = \#\left\{\gamma \in (0,\sqrt C) : \sin\gamma = 0 \text{ or } \tan\gamma = \gamma\right\}.
\end{align}

We next compute the Morse index using Theorem \ref{thm:Morse}, counting the number of conjugate points $t \in (0,1)$ for the family of domains $\Omega_t = (-t,t)$. The constrained equation on $\Omega_t$ is 
\[
	u_{xx} + Cu = \text{constant}, \quad \int_{-t}^t u(x)\,dx = 0.
\]
Setting $\gamma = \sqrt C$, we can write the general solution as
\[
	u(x) = A(\cos \gamma x - \gamma^{-1} \sin\gamma) + B \sin \gamma x.
\]
Therefore, $t \in (0,1)$ is a conjugate time precisely when
\[
	A\left( \cos \gamma t - (\gamma t)^{-1} \sin\gamma t \right) \pm B \sin \gamma t = 0.
\]
It follows that either $ \cos \gamma t = (\gamma t)^{-1} \sin\gamma t$ or $\sin \gamma t = 0$. Recalling that $\gamma = \sqrt C$, we obtain
\begin{align}
	\#\, \text{conjugate points} &= \# \left\{t \in (0,1) : \sin \sqrt Ct = 0 \text{ or } \tan \sqrt Ct = \sqrt Ct \right\},
\end{align}
which agrees with the Morse index computed in \eqref{eq:MorseODE}.

A similar computation shows that the unconstrained Morse index is
\[
	n(\cL) = \#\left\{\gamma \in (0,\sqrt C) : \sin\gamma = 0 \text{ or } \cos\gamma = 0 \right\}.
\]
Comparing solutions of $\cos\gamma=0$ and $\tan\gamma = \gamma$, we see that the constrained and unconstrained indices are related by
\begin{align}\label{eq:comparison}
	n(\cL) = \begin{cases} n(\cL_c) &\text{if } \tan \sqrt C> \sqrt C \\
	n(\cL_c) + 1 & \text{if } \tan \sqrt C \leq \sqrt C.
	\end{cases}
\end{align}

Finally, we verify that this is consistent with the prediction of Theorem \ref{thm:constraint} by computing the constraint matrix $M$. Since $L^2_c(-1,1)^\perp$ is spanned by the constant function $1$, $M$ is simply the  $1\times1$ matrix $\left<L^{-1} 1, 1\right>$. To compute $L^{-1}1$ we must solve the boundary value problem
\[
	u_{xx} + Cu + 1 = 0, \quad u(-1) = u(1) = 0.
\]
Again setting $\gamma = \sqrt C$, we find
\[
	u(x) = \frac{1}{\gamma^2} \left( \frac{\cos \gamma x}{\cos\gamma} - 1\right).
\]
and so
\begin{align*}
	\left<L^{-1}1,1\right> = \int_{-1}^1 u(x)\,dx 
	= \frac{2}{\gamma^2} \left( \frac{\tan\gamma}{\gamma} - 1 \right).
\end{align*}
Therefore $M$ will be nonpositive if and only if $\tan \sqrt C \leq \sqrt C$, as expected from comparing the result in \eqref{eq:comparison} with Theorem \ref{thm:constraint}.

\section{The constrained Maslov index}\label{sec:cMaslov}
In this section we define the Maslov index for constrained eigenvalue problems in multiple dimensions. After reviewing the Fredholm--Lagrangian Grassmannian and the Maslov index, as well as some necessary details of constrained operators and boundary value problems, we define the constrained Maslov index, and prove that it equals (minus) the constrained Morse index, thus proving Theorem \ref{thm:cMas}. As is common for such problems, most of the work goes into establishing the existence and regularity of the relevant paths of Lagrangian subspaces. Once this is known, the main result follows from a straightforward crossing form calculation.

Throughout the section we assume Hypotheses \ref{hyp:domain} and \ref{hyp:bdry}.

\subsection{The Maslov index in infinite dimensions}\label{sec:Maslov}
Before describing the constrained eigenvalue problem, we will review the infinite-dimensional Maslov index, following \cite{F04}.

Suppose $\cH$ is a symplectic Hilbert space: that is, a real Hilbert space equipped with a nondegenerate, skew-symmetric bilinear form $\omega$. A subspace $\mu \subset \cH$ is said to be isotropic if $\omega(v,w) = 0$ for all $v, w \in \mu$, and is said to be Lagrangian if it is isotropic and maximal, in the sense that it is not properly contained in any other isotropic subspace. The set of all Lagrangian subspaces is called the Lagrangian Grassmannian, and is denoted $\Lambda(\cH)$. This is a smooth, contractible Banach manifold, whose differentiable structure comes from associating to each Lagrangian subspace its orthogonal projection operator. Thus a family of Lagrangian subspaces $\mu(t)$ is of class $C^k$ if and only if the corresponding family of projections $P_{\mu(t)}$ is $C^k$.

We assume that the symplectic form can be written as $\omega(v,w) = \left<Jv,w\right>$, where $J \colon \cH \to \cH$ is a skew-symmetric operator satisfying $J^2 = -I$. If $\mu$ is a given Lagrangian subspace, and $A \colon \mu \to \mu$ is a bounded, selfadjoint operator, then the graph
\[
	\operatorname{Gr}_\mu(A) = \{v + JAv : v \in \mu\}
\]
will also be Lagrangian. Moreover, the orthogonal projection onto this graph can be computed algebraically from $A$; see \cite[Equation (2.16)]{F04}. Therefore, if $A(t)$ is a $C^k$ family of bounded, selfadjoint operators on $\mu$, the corresponding family of Lagrangian subspaces $\operatorname{Gr}_{\mu(t)}(A)$ will also be of class $C^k$. This simple observation is our main technical tool for establishing regularity properties of paths of Lagrangian subspaces.

Since $\Lambda(\cH)$ is contractible, there is no nontrivial notion of winding for general curves of Lagrangian subspaces, so we must restrict our attention to a smaller space in order to have a useful index theory. For a fixed Lagrangian subspace $\beta \subset \cH$, we define the Fredholm--Lagrangian Grassmannian,
\[
	\cF \Lambda_\beta(\cH) = \{\mu \in \Lambda(\cH) : \mu \text{ and } \beta \text{ are a Fredholm pair} \},
\]
recalling that $\mu$ and $\beta$ are said to be a Fredholm pair when $\mu \cap \beta$ is finite dimensional and $\mu + \beta$ is closed and has finite codimension. The Fredholm--Lagrangian Grassmannian is a smooth Banach manifold, with fundamental group $\pi_1(\cF \Lambda_\beta(\cH) ) = \bbZ$. Thus one can define an integer-valued homotopy invariant, the Maslov index, which measures the winding of a continuous path of Lagrangian subspaces $\mu(t)$, provided it remains Fredholm with respect to $\beta$ at all times. The utility of this index in eigenvalue problems stems from the fact that it is simply a count (with sign and multiplicity) of the nontrivial intersections between $\mu(t)$ and $\beta$.

To compute the Maslov index in practice, we use crossing forms. Suppose $\mu \colon [a,b] \to \cF \Lambda_\beta(\cH)$ is a continuously differentiable path of Lagrangian subspaces, and $\mu(t_*) \cap \beta \neq \{0\}$ for some $t_* \in [a,b]$. Let $v(t)$ be a continuously differentiable path in $\cH$, with $v(t) \in \mu(t)$ for $t$ close to $t_*$ and $v(t_*) \in \mu(t_*) \cap \beta$. The crossing form is a quadratic form defined on the finite-dimensional vector space $\mu(t_*) \cap \beta$ by
\[
	Q(v(t_*)) = \left. \omega \left(v, \frac{d v}{dt} \right)\right|_{t=t_*}.
\]
It can be shown that this depends only on the vector $v(t_*)$, and not on the path $v(t)$. If $Q$ is nondegenerate, then the crossing time $t_*$ is isolated. Suppose that $t_*$ is the only crossing in $[a,b]$ and let  $(n_+, n_-)$ be the signature of $Q$. The Maslov is then given by
\[
	\Mas\big(\mu_{[a,b]}; \beta \big) = \begin{cases} -n_- & \text{if } t_* = a,  \\n_+ - n_- & \text{if } t_* \in (a,b), \\ n_+ & \text{if } t_* = b. \end{cases}
\]
The Maslov index is additive, in the sense that
\[
	\Mas\big(\mu_{[a,b]}; \beta \big) = \Mas\big(\mu_{[a,c]}; \beta \big) + \Mas\big(\mu_{[c,b]}; \beta \big)
\]
for any $c \in (a,b)$, so we can use the crossing form to compute the Maslov index of any continuously differentiable curve, provided all of its crossings are nondegenerate.

If $E$ is a real Hilbert space, with dual space $E^*$, then $\cH = E \oplus E^*$ is a symplectic Hilbert space. The symplectic form is given by
\[
	\omega\big((x,\phi), (y,\psi) \big) = \psi(x) - \phi(y),
\]
and the corresponding complex structure $J \colon \cH \to \cH$ is
\[
	J(x,\phi) = \big( R^{-1}\phi, -Rx \big),
\]
where $R\colon E \to E^*$ is the isomorphism from the Riesz representation theorem.

To study selfadjoint boundary value problems we will take $E = \Hp$, hence $E^* = \Hm$. Elements of $\cH = \Hp \oplus \Hm$ will arise as the boundary values (or ``traces") of weak solutions to the eigenvalue equation $Lu = \lambda u$, or its constrained analogue, via the trace map
\begin{align}\label{eq:trdef}
	\tr u := \left.\left( u, \frac{\p u}{\p \nu} \right) \right|_{\pO}.
\end{align}
We will use integral notation to denote the dual pairing between $\Hp$ and $\Hm$, so Green's second identity yields
\begin{align}\label{eq:Green}
	\omega(\tr u, \tr v) = \int_\pO \left(u \frac{\p v}{\p \nu} - v \frac{\p u}{\p \nu}\right) = \int_\Omega ( u \Delta v - v \Delta u).
\end{align}
This identity hints at a connection between the Lagrangian subspaces of $\Hp \oplus \Hm$ and selfadjoint, second-order differential operators on $L^2(\Omega)$. While the current paper utilizes a particular version of this  correspondence, it is in fact part of a deeper phenomenon, which has been investigated systematically in \cite{LS16}.

\subsection{Preliminaries on constrained boundary value problems}
Recall that $L$ denotes the formal differential operator $-\Delta + V$, $D$ is the bilinear form defined in \eqref{Ddef}, and $L^2_c(\Omega)$ is a closed subspace of $L^2(\Omega)$ satisfying Hypothesis~\ref{hyp:bdry}. To define the trace of a weak solution, as in \eqref{eq:trdef}, we need to know that its normal derivative is well defined. The statement and proof of the next result, a constrained version of Green's first identity, closely follow \cite[Lemma 4.3]{M00}.

\begin{lemma}\label{lem:trace}
Let $u \in H^1(\Omega) \cap L^2_c(\Omega)$, and suppose there exists $f \in L^2_c(\Omega)$ such that $D(u,v) = \left<f,v\right>$ for all $v \in H^1_0(\Omega) \cap L^2_c(\Omega)$. Then there is a unique function $g \in \Hm$ such that 
\[
	D(u,v) = \left<Lu,v\right> + \int_\pO g v
\]
for all $v \in H^1(\Omega)$, and $g$ satisfies the estimate
\[
	\|g\|_{\Hm} \leq C \left( \|u\|_{H^1(\Omega)} + \|f\|_{L^2(\Omega)} \right).
\]
\end{lemma}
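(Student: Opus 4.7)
The plan is to construct $g \in \Hm$ directly from the hypothesis, then use $g$ to show that the distribution $Lu$ actually lies in $L^2(\Omega)$ (as already announced in the paragraph preceding the lemma), and finally extend the claimed identity from the constrained test-function class to all of $H^1(\Omega)$. This mirrors the structure of the standard (unconstrained) proof of Green's first identity, but the presence of the constraint forces the steps to be reordered. For Step 1, I invoke Hypothesis~\ref{hyp:bdry}(i): $\gamma$ is a bounded surjection from $H^1(\Omega) \cap L^2_c(\Omega)$ (a closed subspace of $H^1(\Omega)$) onto $\Hp$, so the open mapping theorem supplies a bounded right inverse $E$. I then set
\[
	g(\phi) := D(u, E\phi) - \langle f, E\phi\rangle, \qquad \phi \in \Hp.
\]
Two choices of extension differ by an element of $H^1_0(\Omega) \cap L^2_c(\Omega)$, on which the hypothesis on $u$ applies, so the value is independent of $E$; continuity of $E$ together with boundedness of $D$ on $H^1(\Omega)$ then yields the required estimate $\|g\|_{\Hm} \leq C(\|u\|_{H^1} + \|f\|_{L^2})$.

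For Step 2, I promote the distribution $Lu$, defined by $\langle Lu, \varphi\rangle = D(u,\varphi)$ for $\varphi \in C_c^\infty(\Omega)$, to an element of $L^2(\Omega)$. Decompose $\varphi = \varphi_c + \varphi_\perp$ orthogonally in $L^2$. By Hypothesis~\ref{hyp:bdry}(iii), $\varphi_\perp \in H^1(\Omega)$ with $\|\varphi_\perp\|_{H^1} \leq C\|\varphi\|_{L^2}$, so $\varphi_c \in H^1(\Omega) \cap L^2_c(\Omega)$ and $\gamma(\varphi_c) = -\gamma(\varphi_\perp)$ is controlled in $\Hp$ by $\|\varphi\|_{L^2}$. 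Since $\varphi_c$ is itself a valid extension of its own trace, Step 1 gives $D(u,\varphi_c) = \langle f,\varphi_c\rangle + g(\gamma(\varphi_c))$; adding $D(u,\varphi_\perp)$ and using $\langle f,\varphi_\perp\rangle = 0$ (as $f \in L^2_c(\Omega)$, $\varphi_\perp \in L^2_c(\Omega)^\perp$) yields
\[
	D(u,\varphi) = \langle f,\varphi\rangle - g(\gamma(\varphi_\perp)) + D(u,\varphi_\perp),
\]
whose right-hand side is bounded by a constant times $\|\varphi\|_{L^2}$. Density of $C_c^\infty(\Omega)$ in $L^2(\Omega)$ then identifies $Lu$ with an $L^2$ function, and the identity $D(u,v) = \langle Lu, v\rangle$ extends to all $v \in H^1_0(\Omega)$ by continuity. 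Combining this with the hypothesis and the density in Hypothesis~\ref{hyp:bdry}(ii) shows, moreover, that $Lu - f \in L^2_c(\Omega)^\perp$.

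For Step 3, any $v \in H^1(\Omega)$ decomposes as $v = v_0 + E\gamma(v)$ with $v_0 \in H^1_0(\Omega)$: the $H^1_0$ identity handles $v_0$, the definition of $g$ handles $E\gamma(v)$, and the leftover $\langle Lu - f, E\gamma(v)\rangle$ vanishes because $E\gamma(v) \in L^2_c(\Omega)$. Uniqueness of $g$ is immediate from the surjectivity of $\gamma: H^1(\Omega) \to \Hp$. The main obstacle is Step 2: one must bound $D(u,\varphi)$ by $\|\varphi\|_{L^2}$ even though the hypothesis only furnishes such a bound when $\varphi \in H^1_0(\Omega) \cap L^2_c(\Omega)$. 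This goes through because Hypothesis~\ref{hyp:bdry}(iii) forces $\varphi_\perp$ to have $H^1$ regularity controlled by $\|\varphi\|_{L^2}$, which simultaneously keeps $\varphi_c$ in $H^1$ (so that $g(\gamma(\varphi_c))$ is meaningful) and makes the error term $D(u,\varphi_\perp)$ controllable at the $L^2$ level.
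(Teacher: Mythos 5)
Your proof is correct, and its first step coincides exactly with the paper's proof of this lemma: both invoke Hypothesis~\ref{hyp:bdry}(i) to get a bounded right inverse $E$ of the constrained trace map, define $g(h) = D(u,Eh) - \left<f,Eh\right>$, and read off the estimate and uniqueness from the boundedness of $D$ and $E$ and the surjectivity of the constrained trace. Where you diverge is that the paper stops there: its proof of Lemma~\ref{lem:trace} only constructs $g$, and the fact that $Lu \in L^2(\Omega)$ --- without which the displayed identity for arbitrary $v \in H^1(\Omega)$ does not literally parse --- is deferred to the immediately following Lemma~\ref{lem:reg}. Your Steps 2 and 3 absorb that content into a single self-contained argument. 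Moreover, your route to $Lu \in L^2(\Omega)$ is genuinely different from the paper's: the paper decomposes $H^1(\Omega) = \big(H^1(\Omega)\cap L^2_c(\Omega)\big) \oplus L^2_c(\Omega)^\perp$ and uses the Riesz representation theorem on $L^2_c(\Omega)^\perp$ to manufacture a correction $\phi$ with $F = f+\phi$ satisfying the Green identity, whereas you bound the distributional pairing $\left<Lu,\varphi\right> = D(u,\varphi)$ directly for $\varphi \in C_c^\infty(\Omega)$ by splitting $\varphi = \varphi_c + \varphi_\perp$ and using Hypothesis~\ref{hyp:bdry}(iii) to control $\varphi_\perp$ in $H^1$ by $\|\varphi\|_{L^2}$. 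Both arguments lean on the same orthogonal decomposition and on (iii) in the same essential way; yours has the advantage of making the lemma's statement provable as written in one pass (and of exhibiting $Lu - f \in L^2_c(\Omega)^\perp$, i.e.\ $PLu = f$, along the way), while the paper's two-lemma split keeps the trace construction and the interior regularity as separate, reusable statements.
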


\begin{proof}
By Hypothesis \ref{hyp:bdry}(i) the constrained Dirichlet trace map $$\gamma_c := \gamma \big|_{L^2_c(\Omega)} \colon H^1(\Omega) \cap L^2_c(\Omega) \to \Hp$$ is surjective, and hence has a bounded right inverse, $E \colon \Hp \to H^1(\Omega) \cap L^2_c(\Omega)$. Now $g \in \Hm = \Hp^*$ can be defined by its action on $h \in \Hp$:
\[
	g(h) = D(u,Eh) - \left<f, Eh \right>.
\]
From the boundedness of $D$ and $E$ we obtain
\[
	|g(h)| \leq C \left( \|u\|_{H^1(\Omega)} + \|f\|_{L^2(\Omega)} \right) \|h\|_{\Hp}
\]
for all $h \in \Hp$, and the desired estimate follows.

If two such functions exist, say $g_1$ and $g_2$, we would have
\[
	\int_\pO (g_1 - g_2) v = 0
\]
for all $v \in H^1(\Omega) \cap L^2_c(\Omega)$. Since $\gamma_c$ is surjective, this implies $g_1 - g_2 = 0$.
\end{proof}

When $u$ and $v$ are sufficiently smooth, it follows from the classical version of Green's first identity that
\[
	\int_\pO g v = D(u,v) - \left<Lu,v\right> = \int_\pO \frac{\p u}{\p \nu} v.
\]
That is, $g$ is just the normal derivative of $u$. Thus in general we will refer to the function $g \in \Hm$ defined by Lemma \ref{lem:trace} as the normal derivative of $u$.

Note that this lemma does not immediately follow from the aforementioned result in \cite{M00} because we do not know a priori that $Lu \in L^2(\Omega)$. However, using Lemma \ref{lem:trace}, we can prove a posteriori that this is the case.

\begin{lemma}\label{lem:reg}
If $u$ satisfies the conditions of Lemma \ref{lem:trace}, then $Lu \in L^2(\Omega)$ and $PLu = f$.
\end{lemma}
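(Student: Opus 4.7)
The plan is to construct $Lu$ as an element of $L^2(\Omega)$ via the Riesz representation theorem, and then read off the projection identity from its defining formula. Let $P \colon L^2(\Omega) \to L^2_c(\Omega)$ denote orthogonal projection, and let $g \in \Hm$ be the normal derivative supplied by Lemma \ref{lem:trace}.

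First, for $v \in L^2(\Omega)$ set
\[
\Phi(v) = \langle f, Pv\rangle + D\big(u, (I-P)v\big) - \int_\pO g \cdot \gamma\big((I-P)v\big).
\]
Each term makes sense because Hypothesis \ref{hyp:bdry}(iii) places $(I-P)v$ in $H^1(\Omega)$ with $\|(I-P)v\|_{H^1(\Omega)} \leq C\|v\|_{L^2(\Omega)}$. Combining this bound with the boundedness of $D$ on $H^1(\Omega)$, of the trace map $\gamma$, and Cauchy--Schwarz on the first term, one obtains $|\Phi(v)| \leq C'\|v\|_{L^2(\Omega)}$. Riesz then produces a unique $w \in L^2(\Omega)$ with $\langle w, v\rangle = \Phi(v)$ for every $v \in L^2(\Omega)$.

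To identify $w$ with the distributional action of $Lu$, it suffices to show $\Phi(v) = D(u,v)$ for every $v \in C_c^\infty(\Omega)$. For such $v$ the piece $Pv = v - (I-P)v$ lies in $H^1(\Omega) \cap L^2_c(\Omega)$ with trace $\gamma(Pv) = -\gamma((I-P)v)$. Using the bounded right inverse $E \colon \Hp \to H^1(\Omega) \cap L^2_c(\Omega)$ of $\gamma_c$ built in the proof of Lemma \ref{lem:trace}, decompose $Pv = \eta + E(\gamma Pv)$ with $\eta \in H^1_0(\Omega) \cap L^2_c(\Omega)$. The weak equation for $u$ gives $D(u,\eta) = \langle f, \eta\rangle$, while the defining identity $D(u, Eh) = \langle f, Eh\rangle + \int_\pO g h$ for $g$ (from the proof of Lemma \ref{lem:trace}) applied with $h = \gamma Pv$ gives $D\big(u, E(\gamma Pv)\big) = \langle f, E(\gamma Pv)\rangle + \int_\pO g \gamma(Pv)$. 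Adding these and using $\gamma(Pv) = -\gamma((I-P)v)$,
\[
D(u, Pv) = \langle f, Pv\rangle - \int_\pO g \cdot \gamma\big((I-P)v\big).
\]
Hence $D(u, v) = D(u, Pv) + D(u, (I-P)v) = \Phi(v) = \langle w, v\rangle$, so $w = Lu$ in the sense of distributions and in particular $Lu \in L^2(\Omega)$.

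Finally, testing against an arbitrary $v \in L^2_c(\Omega)$ forces $(I-P)v = 0$ and hence $\langle Lu, v\rangle = \Phi(v) = \langle f, v\rangle$, so $Lu - f \perp L^2_c(\Omega)$. Since $f \in L^2_c(\Omega)$ by assumption, this reads $PLu = f$, as required. The main technical hurdle is the decomposition identity $D(u, Pv) = \langle f, Pv\rangle + \int_\pO g \gamma(Pv)$ for $v \in C_c^\infty(\Omega)$; its proof invokes Hypothesis \ref{hyp:bdry}(i) to produce the extension $E$, while Hypothesis \ref{hyp:bdry}(iii) is precisely what ensures the functional $\Phi$ is $L^2$-bounded.
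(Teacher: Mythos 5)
Your proof is correct and takes essentially the same route as the paper's: both split $v = Pv + (I-P)v$, use Hypothesis \ref{hyp:bdry}(iii) to see that the relevant functional is $L^2$-bounded on the complement, and invoke the Riesz representation theorem to realize $Lu$ as an $L^2(\Omega)$ function with $PLu = f$. The only cosmetic difference is that you apply Riesz on all of $L^2(\Omega)$ at once and identify $Lu$ distributionally against $C_c^\infty(\Omega)$, whereas the paper applies Riesz on $L^2_c(\Omega)^\perp$ to produce the correction $\phi$ in $F = f + \phi$ and verifies the Green identity for all $v \in H^1(\Omega)$.
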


\begin{proof}
To prove the result we will construct a function $F \in L^2(\Omega)$ that satisfies
\begin{align}\label{eq:Green2}
	D(u,v) = \left<F,v\right> + \int_\pO g v
\end{align}
for all $v \in H^1(\Omega)$. Such an $F$ must satisfy $\left<F,v\right> = \left<f,v\right>$ for all $v \in H^1(\Omega) \cap L^2_c(\Omega)$, and hence for all $v \in L^2_c(\Omega)$, which means $F = f + \phi$ for some $\phi \in L^2_c(\Omega)^\perp$.

We first claim that
\[
	H^1(\Omega) = \big(H^1(\Omega) \cap L^2_c(\Omega) \big) \oplus L^2_c(\Omega)^\perp.
\]
This follows from writing $v = Pv + (I-P)v$. Since $(I-P)v \in L^2_c(\Omega)^\perp \subset H^1(\Omega)$, we also have $Pv = v - (I-P)v \in H^1(\Omega)$ as required.

Now decompose $v \in H^1(\Omega)$ accordingly as $v_1 + v_2$. Using Lemma \ref{lem:trace} we obtain
\[
	D(u,v) = D(u,v_1) + D(u,v_2) = \left<f,v_1\right> + \int_\pO g v_1 + D(u,v_2).
\]
Comparing this to the right-hand side of \eqref{eq:Green2},
\[
	\left<F,v\right> + \int_\pO g v = \left<f,v_1\right> + \left<\phi, v_2\right> + \int_\pO g v_1 +  \int_\pO g v_2,
\]
we see that $\phi$ must satisfy
\begin{align}\label{blf}
	\left<\phi,v_2\right> = D(u,v_2) - \int_\pO g v_2
\end{align}
for all $v_2 \in L^2_c(\Omega)^\perp$. The inequality \eqref{embed} from Hypothesis \ref{hyp:bdry}(iii) implies the right-hand side of \eqref{blf} is a bounded linear functional on $L^2_c(\Omega)^\perp$, so the existence of $\phi$ follows from the Riesz representation theorem. Setting $F = f + \phi$ completes the proof of \eqref{eq:Green2}. Then for any $v \in H^1_0(\Omega)$ we obtain
\[
	D(u,v) = \left<F,v\right>,
\]
hence $Lu = F \in L^2(\Omega)$ and $PLu = PF = f$ as was claimed.
\end{proof}

We next give a result on the solvability of a Robin-type boundary value problem that will be needed in the proof of Lemma \ref{lem:muLag}. Suppose $u \in H^1(\Omega) \cap L^2_c(\Omega)$ satisfies
\begin{align}\label{eq:Robin}
	D(u,v) = \lambda \left<u,v\right> + \zeta \int_\pO (Ru)v
\end{align}
for every $v \in H^1(\Omega) \cap L^2_c(\Omega)$, where $R \colon \Hp \to \Hm$ is the Riesz duality map and $\zeta \in \bbR$. It follows from Lemmas~\ref{lem:trace} and \ref{lem:reg} that $PLu = \lambda u$ and
\[
	\frac{\p u}{\p \nu} - \zeta Ru = 0,
\]
and so we refer to this as a constrained Robin-type problem. Note that this is not a traditional Robin boundary value problem, even in the absence of constraints, on account of the Riesz operator $R$ that appears in the boundary conditions.

%

\begin{lemma}
\label{constrainedbdry}
For any fixed $\lambda_0 \in \bbR$, there exists $\zeta_0 \in \bbR$ such that the constrained Robin-type boundary value problem \eqref{eq:Robin} is invertible.
\end{lemma}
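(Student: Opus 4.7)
The plan is to reduce invertibility to a spectral condition and then to arrange that condition via analytic perturbation theory. Consider the bilinear form $B_{\zeta,\lambda_0}(u,v) := D(u,v) - \lambda_0\langle u,v\rangle - \zeta\int_\pO (Ru)v$ on $\cH_c := H^1(\Omega) \cap L^2_c(\Omega)$. For $\zeta \leq 0$ and $M \geq \|V\|_\infty + |\lambda_0| + 1$, a direct computation yields G\aa rding's inequality $B_{\zeta,\lambda_0}(u,u) + M\|u\|_{L^2}^2 \geq \|u\|_{H^1}^2$. Combined with the Rellich compactness of $\cH_c \hookrightarrow L^2_c$, this shows the operator $\cH_c \to \cH_c^*$ induced by $B_{\zeta,\lambda_0}$ is Fredholm of index zero, so invertibility of \eqref{eq:Robin} is equivalent to triviality of its kernel.

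The kernel at parameter $\zeta$ coincides with the $\lambda_0$-eigenspace of the selfadjoint operator $T_\zeta$ on $L^2_c(\Omega)$ produced by Kato's first representation theorem applied to the closed semibounded form $B_\zeta(u,v) := D(u,v) - \zeta\int_\pO (Ru)v$. Rellich again gives $T_\zeta$ compact resolvent and discrete spectrum $\mu_1(\zeta) \leq \mu_2(\zeta) \leq \cdots \to +\infty$. Since $B_\zeta$ is affine in $\zeta$ with a $\zeta$-independent form domain, $\{T_\zeta\}$ is a type-(B) analytic family in Kato's sense; each branch $\mu_k$ is piecewise real-analytic, and the Feynman--Hellmann formula gives $\mu_k'(\zeta) = -\|u_k(\zeta)\|_{\Hp}^2 / \|u_k(\zeta)\|_{L^2}^2 \leq 0$ on each analytic piece. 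Using Hypothesis~\ref{hyp:bdry}(i) together with $\dim\Hp = \infty$, for each $k$ I would construct a $k$-dimensional $V_k \subset \cH_c$ on which the constrained trace $\gamma_c$ is injective, so that $\inf\{\|u\|_{\Hp}^2 : u \in V_k,\ \|u\|_{L^2}=1\} > 0$ by finite-dimensional compactness. The min-max principle then forces $\mu_k(\zeta) \to -\infty$ as $\zeta \to +\infty$, so no branch is globally constant and, combined with $\mu_k' \leq 0$ and analyticity, each $\mu_k$ is strictly decreasing on $\bbR$.

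Consequently, $\{\zeta : \mu_k(\zeta) = \lambda_0\}$ contains at most one point for each $k$, so the ``bad set'' $\bigcup_k \{\zeta : \mu_k(\zeta) = \lambda_0\}$ is at most countable; any $\zeta_0$ in its dense complement makes the Fredholm operator attached to $B_{\zeta_0,\lambda_0}$ injective, hence invertible. The main technical obstacle is securing strict monotonicity of every branch: a branch pinned at $\lambda_0$ on an interval would correspond to a $\zeta$-independent eigenfunction $u \in H^1_0 \cap L^2_c$ simultaneously satisfying $PLu = \lambda_0 u$ and $\partial u/\partial\nu = 0$, trapping $\lambda_0$ in $\sigma(T_\zeta)$ throughout that interval. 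The $V_k$/min-max argument, which leans crucially on the surjectivity of $\gamma_c$ from Hypothesis~\ref{hyp:bdry}(i), is precisely what excludes this pathology and unlocks the countability conclusion.
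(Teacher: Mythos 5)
Your overall reduction --- Fredholm alternative to pass from invertibility to kernel triviality, then monotonicity in $\zeta$ of the eigenvalues of the Robin-type family --- is the same skeleton as the paper's proof, which simply quotes strict monotonicity of the ordered eigenvalues from the proof of Theorem 3.2 in \cite{Rohleder2014}. The gap is in how you rule out an eigenvalue branch sitting at $\lambda_0$ on an interval. Your mechanism is ``$\mu_k(\zeta)\to-\infty$ as $\zeta\to+\infty$, hence no branch is constant, hence (with $\mu_k'\le 0$ and analyticity) every branch is strictly decreasing on $\bbR$.'' This fails for two reasons. First, the perturbing form $u\mapsto\|\gamma u\|_{\Hp}^{2}$ is \emph{not} infinitesimally form-bounded relative to $D$: the trace $\gamma\colon H^1(\Omega)\to\Hp$ is bounded but not compact, and on highly oscillatory harmonic functions (e.g.\ the real part of $r^n e^{in\theta}$ on a disk) $\|\gamma u\|_{\Hp}^2$ is comparable to $\|u\|_{H^1}^2$ while $\|u\|_{L^2}$ tends to zero. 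Hence $B_\zeta$ is \emph{not} semibounded for large $\zeta>0$, $T_\zeta$ has no discrete spectrum bounded below there, the family is type (B) only on a bounded $\zeta$-interval, and the limit $\zeta\to+\infty$ your min-max argument requires is simply not available. Second, even where the ordered eigenvalues are defined, ``continuous, non-increasing, piecewise analytic, and tending to $-\infty$'' does not imply ``strictly decreasing'': an ordered eigenvalue is a minimum of analytic branches and can be constant on an interval before it starts to decrease (compare $\zeta\mapsto\min(\lambda_0,\lambda_0+1-\zeta)$). So each set $\{\zeta:\mu_k(\zeta)=\lambda_0\}$ could a priori be a nondegenerate interval, and the union of such intervals need not be countable; your final step collapses.

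The missing ingredient is exactly the obstacle you name at the end but then attack with the wrong tool. If an analytic eigenvalue branch is stationary on an interval, your own Feynman--Hellmann formula forces $\gamma u_k=0$ for the corresponding eigenfunction branch, so $u_k\in H^1_0(\Omega)\cap L^2_c(\Omega)$ satisfies $P(L-\lambda_0)u_k=0$ together with $\p u_k/\p\nu=\zeta R(\gamma u_k)=0$; that is, $u_k$ has vanishing Cauchy data. Unique continuation --- the same argument the paper uses to show that $\tr$ is injective on $K_c(\lambda)$ --- then gives $u_k=0$, a contradiction. This yields strict monotonicity of every branch in a neighborhood of $\zeta=0$ directly, with no excursion to $\zeta=+\infty$ and no need for the subspaces $V_k$, and it is precisely the content of the paper's citation of \cite{Rohleder2014}. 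With that substitution your argument closes, and in fact produces $\zeta_0$ arbitrarily close to $0$, as the paper's proof asserts.
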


In particular, this means the homogeneous problem only admits the zero solution, whereas the inhomogeneous problem
\begin{align}\label{eq:inRobin}
	PLu = \lambda_0 u, \quad \frac{\p u}{\p \nu} - \zeta_0 Ru = h
\end{align}
has a unique solution for each $h \in \Hm$. This construction is the key ingredient in the proof of Lemma \ref{lem:muLag}, where it will be used to write the constrained Cauchy data space $\mu_c(\lambda)$ as the graph of a selfadjoint operator on a fixed Lagrangian subspace.

\begin{proof}
We will in fact prove that $\zeta_0$ can be chosen arbitrarily close to $0$. Let $\cL_\zeta$ be the selfadjoint operator corresponding to the bilinear form
\[
	D_\zeta(u,v) =  D(u,v) - \lambda_0 \left<u,v\right> - \zeta \int_\pO (Ru)v
\]
on $X \cap L^2_c(\Omega)$. By construction, $u \in \ker \cL_\zeta$ if and only if $u$ solves the homogeneous problem
\[
	P(L - \lambda_0) u = 0, \quad \frac{\p u}{\p \nu} - \zeta Ru = 0.
\]
It follows immediately from the proof of Theorem 3.2 in \cite{Rohleder2014} that the ordered eigenvalues of $\cL_\zeta$ are strictly monotone with respect to $\zeta$. Therefore, if $\cL_0$ is not invertible, $\cL_\zeta$ will be for any $0 < |\zeta| \ll 1$. 
\end{proof}

Finally, we discuss the relation of the selfadjoint operator $\cL_c$ to the operator $P \cL \big|_{L^2_c(\Omega)}$ that arises in the stability literature. Recall that $\cL$ is the operator corresponding to the bilinear form $D$ with form domain $X \subset H^1(\Omega)$, whereas $\cL_c$ corresponds to the form $D$ restricted to $X \cap L^2_c(\Omega)$. While it is easily verified that $P \cL u = \cL_c u$ for all $u \in D(\cL) \cap L^2_c(\Omega)$, we require an additional hypothesis on the constraint space to conclude that the two operators are identical, ie. that $D(\cL_c) = D(\cL) \cap L^2_c(\Omega)$.

\begin{prop}
If $L^2_c(\Omega)^\perp$ is continuously embedded in $X$, then $\cL_c = P \cL \big|_{L^2_c(\Omega)}$.
\end{prop}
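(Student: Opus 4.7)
The plan is to establish equality of domains and agreement of the two operators by a decomposition argument. One direction, namely $D(\cL)\cap L^2_c(\Omega) \subseteq D(\cL_c)$ with $\cL_c u = P\cL u$, has already been observed by the authors: for $u \in D(\cL)\cap L^2_c(\Omega)$ and $v \in X \cap L^2_c(\Omega)$, one simply writes $D(u,v) = \langle \cL u, v\rangle = \langle P\cL u, v\rangle$ since $v \in L^2_c(\Omega)$. So the real work is in the reverse inclusion $D(\cL_c) \subseteq D(\cL)$, which amounts to showing that for $u \in D(\cL_c)$ the map $v \mapsto D(u,v)$ is bounded in the $L^2$ norm on \emph{all} of $X$, not merely on $X \cap L^2_c(\Omega)$.

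Here I would use the orthogonal decomposition established inside the proof of Lemma~\ref{lem:reg}, adapted to the form domain $X$: under the hypothesis $L^2_c(\Omega)^\perp \subset X$ one has
\[
    X = \bigl(X \cap L^2_c(\Omega)\bigr) \oplus L^2_c(\Omega)^\perp,
\]
since for any $v \in X$ the piece $(I-P)v$ lies in $L^2_c(\Omega)^\perp \subset X$, hence $Pv = v - (I-P)v \in X$ as well. Given $u \in D(\cL_c)$ and $v \in X$, I would split $v = v_1 + v_2$ accordingly and write
\[
    D(u,v) = D(u,v_1) + D(u,v_2).
\]
The first term is handled by the defining property of $\cL_c$: $D(u,v_1) = \langle \cL_c u, v_1\rangle$, and since $v_1 = Pv$ we have $\|v_1\|_{L^2} \le \|v\|_{L^2}$, giving an estimate by $\|\cL_c u\|_{L^2}\|v\|_{L^2}$.

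The second term is where the continuous embedding hypothesis enters. Since $v_2 \in L^2_c(\Omega)^\perp$ and this subspace embeds continuously into $X$, we have $\|v_2\|_X \le C\|v_2\|_{L^2} \le C\|v\|_{L^2}$, and therefore
\[
    |D(u,v_2)| \le \|D\|\,\|u\|_X\,\|v_2\|_X \le C\|u\|_X\,\|v\|_{L^2}.
\]
Combining the two estimates shows $|D(u,v)| \le C_u \|v\|_{L^2}$ for all $v \in X$, so $u \in D(\cL)$. The identity $\cL u = \cL_c u$ after projection then follows by testing against $v \in X \cap L^2_c(\Omega)$ and using the uniqueness of the Riesz representative.

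The main technical obstacle is the reverse inclusion just discussed, and specifically the estimate on $D(u,v_2)$: without the hypothesis that $L^2_c(\Omega)^\perp \hookrightarrow X$ continuously, there is no reason that the piece of a test function lying in the constraint complement has its full $X$-norm controlled by its $L^2$-norm, and the bilinear form $D$ would then fail to extend to an $L^2$-bounded functional on $X$. Everything else is a bookkeeping exercise once the decomposition of $X$ and this continuity bound are in hand.
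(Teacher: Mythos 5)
Your proof is correct and follows essentially the same route as the paper: the same decomposition $X = \bigl(X \cap L^2_c(\Omega)\bigr) \oplus L^2_c(\Omega)^\perp$, with the continuous embedding hypothesis used at the same point to control the $D(u,v_2)$ term. The only cosmetic difference is that you verify $L^2$-boundedness of $v \mapsto D(u,v)$ on all of $X$ and conclude $u \in D(\cL)$ directly, whereas the paper explicitly constructs the representative $F = f + \phi$ via Riesz representation on $L^2_c(\Omega)^\perp$; these are the same argument in different packaging.
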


For the Dirichlet problem this requires $L^2_c(\Omega)^\perp \subset H^1_0(\Omega)$, so all of the constraint functions $\phi \in L^2_c(\Omega)^\perp$ must satisfy Dirichlet boundary conditions. For the Neumann problem we already have $L^2_c(\Omega)^\perp \subset X = H^1(\Omega)$ by Hypothesis \ref{hyp:bdry}(iii).

\begin{proof}
First, suppose $u \in D(\cL) \cap L^2_c(\Omega)$, with $\cL u = f \in L^2(\Omega)$. From the definition of $\cL$, this means $D(u,v) = \left<f,v\right>$ for all $v \in X$. Thus for any $v \in X \cap L^2_c(\Omega)$ we have
\[
	D(u,v) = \left<f,v\right> = \left<Pf,v\right>,
\]
hence $u \in D(\cL_c)$ and $\cL_c u = Pf = P \cL u$. It follows that $P \cL \big|_{L^2_c(\Omega)} \subset \cL_c$.

To prove the other direction, we first observe that the form domain satisfies
\[
	X = \big(X \cap L^2_c(\Omega)\big) \oplus L^2_c(\Omega)^\perp.
\]
To see this, we decompose $u \in X$ as $u = Pu + (I-P)u$ and note that $(I-P)u \in L^2_c(\Omega)^\perp \subset X$, hence $Pu = u - (I-P)u \in X$.

Now suppose $u \in D(\cL_c)$, with $\cL_c u = f \in L^2_c(\Omega)$, so $D(u,v) = \left<f,v\right>$ for all $v \in X \cap L^2_c(\Omega)$. To complete the proof we must show that $u \in D(\cL)$, which entails constructing a function $F \in L^2(\Omega)$ such that $D(u,v) = \left<F,v\right>$ for all $v \in X$. Such an $F$ would necessarily satisfy
\[
	\left<F,v\right> = D(u,v) = \left<f,v\right>
\]
for all $v \in X \cap L^2_c(\Omega)$, hence $\left<F-f,v\right> = 0$ for all $v \in L^2_c(\Omega)$, and so we seek $F$ in the form $F = f + \phi$ for some $\phi \in L^2_c(\Omega)^\perp$.

As noted above, we can decompose $v \in X$ as
\[
	v = v_1 + v_2 \in \big(X \cap L^2_c(\Omega)\big) \oplus L^2_c(\Omega)^\perp.
\]
Then $D(u,v) = \left<F,v\right>$ if and only if
\[
	D(u,v_1) + D(u,v_2) = \left<f + \phi, v_1 + v_2\right> = \left<f,v_1\right> + \left<\phi, v_2\right>.
\]
Since $D(u,v_1) = \left<f,v_1\right>$ for $v_1 \in X \cap L^2_c(\Omega)$, we require $\phi$ to satisfy $D(u,v_2) = \left<\phi, v_2\right>$ for all $v_2 \in L^2_c(\Omega)^\perp$. By the continuous embedding hypothesis, the functional $v_2 \mapsto D(u,v_2)$ is bounded on $L^2_c(\Omega)^\perp$, and so $\phi$ exists by the Riesz representation theorem. It follows that $u \in D(\cL) \cap L^2_c(\Omega)$, and $P \cL u = PF = f = \cL_c u$, hence $\cL_c \subset P \cL \big|_{L^2_c(\Omega)}$.
\end{proof}

\subsection{Construction of the Maslov index}
We now have all of the ingredients in place to define the constrained Maslov index, and prove that it equals (minus) the Morse index of the constrained operator $\cL_c$.

The space of weak solutions for the constrained problem, in the absence of boundary conditions, is
\begin{align}\label{def:Kc}
	K_c(\lambda) = \left\{u \in H^1(\Omega) \cap L^2_c(\Omega) : D(u,v) = \lambda \left<u,v\right> \text{ for all } v \in H^1_0(\Omega) \cap L^2_c(\Omega)  \right\},
\end{align}
where the bilinear form $D$ is defined in \eqref{Ddef}. Any $u \in K_c(\lambda)$ satisfies the hypotheses of Lemma \ref{lem:trace}, with $f = \lambda u$, and so the boundary trace
\[
	\tr u := \left.\left( u, \frac{\p u}{\p \nu} \right) \right|_{\pO}
\]
is a well-defined element of $\Hh$, cf. \cite{CJM14}, and
\begin{align}\label{def:muc}
	\mu_c(\lambda) = \{ \tr u : u \in K_c(\lambda) \}
\end{align}
defines a subspace of $\Hh$. In fact, from Lemma \ref{lem:reg} we have $u \in H^2_{\rm loc}(\Omega)$, and so it follows from a unique continuation argument (as in \cite{BR12}) that
\[
	\tr \colon K_c(\lambda) \to \Hp \oplus \Hm
\]
is injective.

\begin{lemma}
\label{lem:muLag}
$\lambda \mapsto \mu_c(\lambda)$ is a smooth family of Lagrangian subspaces in $\cH$.
\end{lemma}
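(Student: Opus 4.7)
The proof has two components: showing each $\mu_c(\lambda)$ is Lagrangian, and showing the family is smooth. Following the strategy outlined in Section \ref{sec:Maslov}, I will establish both simultaneously by representing $\mu_c(\lambda)$ locally as the graph $\operatorname{Gr}_{\ell}(A(\lambda))$ of a smooth family of bounded selfadjoint operators $A(\lambda)$ on a fixed reference Lagrangian $\ell$. The graph of a selfadjoint operator on a Lagrangian is automatically Lagrangian, and $C^k$ dependence of $A(\lambda)$ implies $C^k$ dependence of the projections $P_{\mu_c(\lambda)}$.

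\textbf{Isotropy.} For $u_1, u_2 \in K_c(\lambda)$, Lemma \ref{lem:reg} gives $L u_i \in L^2(\Omega)$ with $Lu_i - \lambda u_i =: \phi_i \in L^2_c(\Omega)^\perp$. Since $\Delta u_i = V u_i - L u_i \in L^2(\Omega)$, Green's identity \eqref{eq:Green} applies and
\[
\omega(\tr u_1, \tr u_2) = \int_\Omega (u_2 L u_1 - u_1 L u_2) = \int_\Omega (u_2 \phi_1 - u_1 \phi_2) = 0,
\]
using $u_i \in L^2_c(\Omega)$ against $\phi_j \in L^2_c(\Omega)^\perp$. Hence $\mu_c(\lambda)$ is isotropic.

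\textbf{Graph construction near a fixed $\lambda_0$.} Apply Lemma \ref{constrainedbdry} to choose $\zeta_0 \neq 0$ so that the constrained Robin problem at $(\lambda_0, \zeta_0)$ is invertible. Set $\ell_{\zeta_0} = \{(x, \zeta_0 R x) : x \in \Hp\}$; a direct computation with the symplectic form shows this is Lagrangian. The homogeneous part of Lemma \ref{constrainedbdry} gives $\mu_c(\lambda_0) \cap \ell_{\zeta_0} = \{0\}$, and the solvability of the inhomogeneous problem \eqref{eq:inRobin} gives $\mu_c(\lambda_0) + \ell_{\zeta_0} = \cH$. Since the bilinear form $D_\zeta(\cdot,\cdot) - \lambda\langle\cdot,\cdot\rangle$ depends analytically on $\lambda$ through a bounded perturbation, the invertibility persists on some open interval $I \ni \lambda_0$, producing a solution operator $S(\lambda)\colon \Hm \to H^1(\Omega) \cap L^2_c(\Omega)$ that is analytic in $\lambda \in I$. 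The parametrization
\[
h \longmapsto \tr S(\lambda) h = \bigl(\gamma S(\lambda)h,\ h + \zeta_0 R \gamma S(\lambda)h\bigr)
\]
exhibits $\mu_c(\lambda)$ as an analytic image of $\Hm$. After, if necessary, a second application of Lemma \ref{constrainedbdry} to adjust $\zeta_0$ so that $J\ell_{\zeta_0}$ is also transverse to $\mu_c(\lambda_0)$, the decomposition $\cH = \ell_{\zeta_0} \oplus J\ell_{\zeta_0}$ lets me rewrite this parametrization as $\mu_c(\lambda) = \operatorname{Gr}_{\ell_{\zeta_0}}(A(\lambda))$ for a bounded operator $A(\lambda)\colon \ell_{\zeta_0}\to\ell_{\zeta_0}$ that inherits analytic dependence on $\lambda$ from $S(\lambda)$. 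Selfadjointness of $A(\lambda)$ is forced by the isotropy already established, because $\omega(v + JAv, w + JAw) = \langle Aw, v\rangle_{\ell_{\zeta_0}} - \langle Av, w\rangle_{\ell_{\zeta_0}}$ vanishes on $\ell_{\zeta_0}$ iff $A$ is symmetric. This gives the desired smooth Lagrangian graph representation on $I$, and smoothness being a local property, covers all of $\bbR$.

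\textbf{Main obstacle.} The delicate part is ensuring that the $\Hm$-parametrization by $S(\lambda)$ can be recast as a genuine graph over $\ell_{\zeta_0}$ with selfadjoint slope---and that the same reference Lagrangian works on a whole neighborhood in $\lambda$. The first point requires combining the two transversality statements for $\ell_{\zeta_0}$ and $J\ell_{\zeta_0}$ (which may require two separate applications of Lemma \ref{constrainedbdry} and a mild choice of $\zeta_0$), while the second needs persistence of invertibility under the perturbation $\lambda \mapsto \lambda_0$, which reduces to a standard Neumann-series argument for the form $D_\zeta(\cdot,\cdot) - \lambda\langle\cdot,\cdot\rangle$. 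Once the local graph form is in hand, the rest of the verification is algebraic, using the symplectic relations among $R$, $J$, and $\zeta_0$.
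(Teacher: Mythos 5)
Your overall strategy is the same as the paper's: prove isotropy via the generalized Green's identity, then realize $\mu_c(\lambda)$ locally as the graph of a bounded selfadjoint operator over a fixed Robin-type reference Lagrangian supplied by Lemma \ref{constrainedbdry}. Your isotropy computation (routing through Lemma \ref{lem:reg} to get $Lu_i = \lambda u_i + \phi_i$ with $\phi_i \in L^2_c(\Omega)^\perp$, then pairing against $u_j \in L^2_c(\Omega)$) is a valid variant of the paper's more direct use of the symmetry of $D$, and your observation that selfadjointness of $A(\lambda)$ is \emph{equivalent} to isotropy of its graph is a clean way to dispatch that step.

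There is, however, a concrete slip in the graph construction. You take the reference Lagrangian to be $\ell_{\zeta_0} = \{(x,\zeta_0 Rx)\}$, the Robin subspace itself. But $\operatorname{Gr}_{\ell}(A) = \{v + JAv : v \in \ell\}$ is a graph over $\ell$ with vertical direction $J\ell$, so writing $\mu_c(\lambda_0) = \operatorname{Gr}_{\ell_{\zeta_0}}(A)$ with $A$ bounded requires $\mu_c(\lambda_0) \oplus J\ell_{\zeta_0} = \cH$. What Lemma \ref{constrainedbdry} actually gives you is $\mu_c(\lambda_0) \oplus \ell_{\zeta_0} = \cH$ --- transversality to the Robin subspace, not to its image under $J$. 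Your proposed patch, a ``second application of Lemma \ref{constrainedbdry} to adjust $\zeta_0$ so that $J\ell_{\zeta_0}$ is also transverse,'' does not go through as stated: a short computation with $J(x,\phi) = (R^{-1}\phi,-Rx)$ shows $J\ell_{\zeta_0}$ is the Robin subspace with parameter $-1/\zeta_0$, so you would need simultaneous invertibility of the Robin problem at the two reciprocally related parameters $\zeta_0$ and $-1/\zeta_0$, whereas the lemma (and its proof, via eigenvalue monotonicity) only guarantees invertibility for some $\zeta_0$ arbitrarily close to $0$. The fix is simply to use $\rho = J^{-1}\ell_{\zeta_0} = \{(f,g) : f + \zeta_0 R^{-1}g = 0\}$ as the reference Lagrangian, exactly as the paper does: then $J\rho = \ell_{\zeta_0}$, the transversality delivered by Lemma \ref{constrainedbdry} is precisely the one needed, and your parametrization $h \mapsto \tr S(\lambda)h$ converts directly into $\mu_c(\lambda) = \{v + JA(\lambda)v : v \in \rho\}$ with $A(\lambda)\colon \rho \to \rho$ inheriting smoothness from $S(\lambda)$. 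With that substitution the rest of your argument is correct.
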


\begin{proof}
We first prove that $\mu_c(\lambda)$ is isotropic. Let $u,v \in K_c(\lambda)$. Then
\begin{align*}
	\omega(\tr u, \tr v) &= \int_\pO \left( u \frac{\p v}{\p \nu} - v \frac{\p u}{\p \nu} \right) \\
	&= D(v,u) - \left<\lambda v, u\right> - D(u,v) + \left<\lambda u, v\right> \\
	&= 0
\end{align*}
because $D$ is symmetric.

We now use the strategy of \cite[Proposition 3.5]{CJM14} to prove that $\mu_c(\lambda)$ is Lagrangian and is smooth with respect to $\lambda$. The idea, as described in Section \ref{sec:Maslov}, is to realize each subspace $\mu_c(\lambda)$ as the graph of a bounded, selfadjoint operator $A(\lambda)$ on a fixed Lagrangian subspace. This will imply each subspace is in fact Lagrangian, and the family $\{\mu_c(\lambda)\}$ is as smooth with respect to $\lambda$ as the family $\{A(\lambda)\}$ is. The operator $A(\lambda)$ will be a constrained Robin-to-Robin map for the $L-\lambda$. (The Neumann-to-Dirichlet map suffices whenever it is defined, i.e. when the constrained operator with Neumann boundary conditions is invertible.) The main modification to the argument in \cite{CJM14} stems from using Lemma \ref{constrainedbdry} to find a Robin-type boundary condition for which the constrained operator is invertible.

Since smoothness is a local property, it will suffice to construct $A(\lambda)$ in a neighborhood of a fixed $\lambda_0$. By Lemma \ref{constrainedbdry} there exists $\zeta_0 \in \bbR$ so that the constrained boundary value problem \eqref{eq:inRobin} is invertible for $\lambda = \lambda_0$, and hence for any nearby $\lambda$. Using this fixed value of $\zeta_0$ we define the subspace
\[
	\rho = \{ (f,g) \in \cH : f + \zeta_0 R^{-1}g = 0\},
\]
By construction, for any $(f,g) \in \rho$ there is a unique weak solution $u \in H^1(\Omega) \cap L^2_c(\Omega)$ to \eqref{eq:inRobin}, with $h = g - \zeta_0 Rf \in \Hm$. From this solution $u$ we define
\[
	A(\lambda)(f,g) = J^{-1}\Big( u\big|_\pO - f, \zeta_0 R\big(u\big|_\pO-f\big) \Big).
\]
Since $ \big( u|_\pO - f, \zeta_0 R(u|_\pO-f) \big)$ is contained in the subspace $J\rho = \{(f,g) : g = \zeta_0 R f \}$, we have $A(\lambda) \colon \rho \to \rho$ as desired. The proof that $A(\lambda)$ is selfadjoint follows directly from Green's identity, as in \cite{CJM14}.
\end{proof}

We next consider the boundary conditions. Since we have assumed $\beta$ is either $\beta_{\rm D}$ or $\beta_{\rm N}$, the following result is immediate.

\begin{lemma} 
\label{lem:betaLag}
The boundary space $\beta \subset \cH$ is Lagrangian.
\end{lemma}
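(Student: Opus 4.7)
The plan is to verify the two defining properties of a Lagrangian subspace---isotropy and maximality---directly from the explicit descriptions \eqref{betaD} and \eqref{betaN}, together with the formula $\omega((x,\phi),(y,\psi)) = \psi(x) - \phi(y)$ from Section \ref{sec:Maslov}. Since $\beta_{\rm D}$ and $\beta_{\rm N}$ are the canonical examples of Lagrangian subspaces of $E\oplus E^*$ with $E = \Hp$, this should reduce to a short algebraic computation.

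For isotropy, I would just plug in: any two elements $(0,\phi), (0,\psi) \in \beta_{\rm D}$ satisfy $\omega((0,\phi),(0,\psi)) = \psi(0) - \phi(0) = 0$, and any two elements $(x,0),(y,0) \in \beta_{\rm N}$ satisfy $\omega((x,0),(y,0)) = 0$. So both subspaces are isotropic.

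For maximality, the cleanest approach is to show that $\beta$ equals its symplectic orthogonal $\beta^{\omega} := \{v \in \cH : \omega(v,w) = 0 \text{ for all } w \in \beta\}$; since $\beta \subset \beta^{\omega}$ by isotropy, it suffices to prove the reverse inclusion. Suppose $(x,\phi) \in \beta_{\rm D}^{\omega}$. Then $\psi(x) = \omega((x,\phi),(0,\psi)) = 0$ for every $\psi \in \Hm$, and the nondegeneracy of the duality pairing between $\Hp$ and $\Hm$ forces $x = 0$, so $(x,\phi) \in \beta_{\rm D}$. Symmetrically, if $(x,\phi) \in \beta_{\rm N}^{\omega}$ then $-\phi(y) = \omega((x,\phi),(y,0)) = 0$ for every $y \in \Hp$, hence $\phi = 0$ and $(x,\phi) \in \beta_{\rm N}$.

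There is essentially no obstacle here: the whole content is the nondegeneracy of the $\Hp$--$\Hm$ pairing, which is built into the identification $\Hm = (\Hp)^*$ used to set up the symplectic Hilbert space structure in Section \ref{sec:Maslov}. This explains the authors' remark that the result is immediate.
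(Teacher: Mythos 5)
Your proof is correct and is exactly the standard verification the paper has in mind when it declares the lemma ``immediate'' (the paper supplies no written proof): isotropy by direct substitution into $\omega((x,\phi),(y,\psi))=\psi(x)-\phi(y)$, and maximality via $\beta=\beta^{\omega}$ using nondegeneracy of the $\Hp$--$\Hm$ duality pairing. Nothing further is needed.
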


Next, we study the intersection properties of $\mu_c(\lambda)$ and $\beta$.

\begin{lemma}\label{lem:Fredholm}
For each $\lambda \in \bbR$, $\mu_c(\lambda)$ and $\beta$ comprise a Fredholm pair, with $\dim \mu_c(\lambda) \cap \beta = \dim \ker (\cL_c - \lambda)$.
\end{lemma}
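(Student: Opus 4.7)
The statement has two ingredients: identifying the intersection $\mu_c(\lambda) \cap \beta$ with $\ker(\cL_c - \lambda)$, and verifying the Fredholm condition on the sum. I would handle the identification first, since it is what makes the dimension claim meaningful, and only then address the closed-sum/finite-codimension question.

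For the identification, I would construct the trace map $\tr\colon \ker(\cL_c - \lambda) \to \mu_c(\lambda) \cap \beta$ and show it is a bijection. Start with $\beta = \beta_{\rm D}$: an element of $\mu_c(\lambda) \cap \beta_{\rm D}$ has the form $\tr u$ for $u \in K_c(\lambda)$ with $u|_\pO = 0$, so $u \in H^1_0(\Omega) \cap L^2_c(\Omega)$, and the weak equation defining $K_c(\lambda)$ is exactly the eigenvalue equation for $\cL_c$ on the Dirichlet form domain. Conversely, any $u \in \ker(\cL_c - \lambda)$ already lies in $K_c(\lambda)$ (restrict test functions), and its trace visibly lies in $\beta_{\rm D}$. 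For $\beta = \beta_{\rm N}$ the argument is a bit more delicate: given $\tr u = (u|_\pO, 0) \in \mu_c(\lambda) \cap \beta_{\rm N}$, Lemma \ref{lem:trace} upgrades the weak equation from test functions in $H^1_0(\Omega) \cap L^2_c(\Omega)$ to all $v \in H^1(\Omega) \cap L^2_c(\Omega)$, which is the form domain of the Neumann $\cL_c$. Conversely, given $u \in \ker(\cL_c - \lambda)$ in the Neumann case, Lemma \ref{lem:reg} gives $PLu = \lambda u$, and pairing the identity of Lemma \ref{lem:trace} against $v \in H^1(\Omega) \cap L^2_c(\Omega)$ shows that the normal derivative $g \in \Hm$ annihilates every element of $\gamma_c(H^1(\Omega) \cap L^2_c(\Omega)) = \Hp$ (Hypothesis \ref{hyp:bdry}(i)), forcing $g = 0$. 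Injectivity of $\tr$ on $K_c(\lambda)$ was already noted after \eqref{def:muc} (via unique continuation, using $u \in H^2_{\mathrm{loc}}$ from Lemma \ref{lem:reg}), so the map is a bijection and the dimension formula follows.

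Next, finite-dimensionality of $\ker(\cL_c - \lambda)$ is automatic from standard spectral theory: the form domain $X \cap L^2_c(\Omega) \subset H^1(\Omega)$ is compactly embedded in $L^2(\Omega)$ (Rellich--Kondrachov), and a fortiori in $L^2_c(\Omega)$, so $\cL_c$ is selfadjoint with compact resolvent and hence has purely discrete spectrum with finite-multiplicity eigenvalues. This yields $\dim \mu_c(\lambda) \cap \beta < \infty$.

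The real work is showing that $\mu_c(\lambda) + \beta$ is closed and of finite codimension. The approach I would take is to exploit the graph representation from Lemma \ref{lem:muLag}: write $\mu_c(\lambda) = \operatorname{Gr}_\rho(A(\lambda))$ for the Robin Lagrangian $\rho$, and observe that $\rho$ is transversal to $\beta$ (both $\beta_{\rm D}$ and $\beta_{\rm N}$ meet $\rho$ only at $0$ from the defining relation $f + \zeta_0 R^{-1}g = 0$, and together they fill $\cH$), so $\beta$ itself is a graph $\operatorname{Gr}_\rho(B)$ for some bounded selfadjoint $B\colon \rho \to \rho$. Decomposing $\cH = \rho \oplus J\rho$, an element of $\cH$ lies in $\mu_c(\lambda) + \beta$ if and only if a certain component lies in the range of $A(\lambda) - B$, so the Fredholm pair property reduces to Fredholmness of $A(\lambda) - B$ on $\rho$, whose kernel we already know is finite-dimensional. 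To deduce closed range, I would argue that $A(\lambda) - B$ is a compact perturbation of an invertible operator: the solution map underlying $A(\lambda)$ factors through $H^1(\Omega) \cap L^2_c(\Omega)$ and is gained by one Sobolev derivative relative to the boundary data, while the Dirichlet trace on that space is bounded, so the induced boundary-to-boundary operator is compact on $\rho$; combined with invertibility of the reference Robin problem at some $\lambda_0$ supplied by Lemma \ref{constrainedbdry}, this gives the compact-plus-invertible structure. The main obstacle is precisely this last step, as selfadjointness of $A(\lambda)-B$ together with finite-dimensional kernel does not by itself imply closed range; the compactness must be extracted carefully from the compact embedding $H^1(\Omega) \hookrightarrow L^2(\Omega)$ applied to the constrained solution operator.
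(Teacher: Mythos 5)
Your identification of $\mu_c(\lambda) \cap \beta$ with $\tr\big(\ker(\cL_c - \lambda)\big)$ and the finite-dimensionality of that kernel via compact resolvent are fine (the paper disposes of this in one line, citing injectivity of the trace map, so your Neumann-case bookkeeping with Lemma \ref{lem:trace} and Hypothesis \ref{hyp:bdry}(i) is if anything more explicit). The reduction of the Fredholm-pair property to Fredholmness of $A(\lambda) - B$ on $\rho$ is also a legitimate framework, modulo the small correction that a Lagrangian is a graph over $\rho$ when it is transversal to $J\rho$, not to $\rho$ (both conditions do hold here once $\zeta_0 \neq 0$).

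The genuine gap is the closed-range step, and the mechanism you propose for it does not work. You claim the boundary-to-boundary solution operator inside $A(\lambda)$ is compact because the solution ``is gained by one Sobolev derivative relative to the boundary data.'' But that gain is exactly one derivative: the Robin data $h$ lives in $\Hm = H^{-1/2}(\pO)$, the solution lies in $H^1(\Omega)$, and its Dirichlet trace lies in $\Hp = H^{1/2}(\pO)$. A map $H^{-1/2}(\pO) \to H^{1/2}(\pO)$ that gains precisely one derivative is bounded with nothing to spare; it is not compact between those spaces (for the model half-space Laplacian the Robin-to-Dirichlet map is an isomorphism $H^{-1/2} \to H^{1/2}$ modulo smoothing). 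So $A(\lambda) - B$ is not a compact perturbation of an invertible operator for the reason given, and since a selfadjoint operator with finite-dimensional kernel need not have closed range -- as you yourself note -- the Fredholm property is left unproved. The paper closes this step by an entirely different device: the a priori energy estimate
\begin{equation*}
	\|u\|_{H^1(\Omega)} \leq C \left( \|u\|_{L^2(\Omega)} + \left\| P_\beta^\perp (\tr u) \right\|_{\cH} \right), \qquad u \in K_c(\lambda),
\end{equation*}
obtained from the constrained Green identity of Lemma \ref{lem:trace} with $v = u$ and the arithmetic--geometric mean inequality. Combined with the compactness of $H^1(\Omega) \hookrightarrow L^2(\Omega)$, this semi-Fredholm estimate yields closedness of $\mu_c(\lambda) + \beta$ and finite-dimensionality of the intersection directly, and the finite codimension then comes for free from the Lagrangian identity $(\mu_c(\lambda) + \beta)^\perp = J(\mu_c(\lambda) \cap \beta)$. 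If you want to keep your graph-operator formulation, you would still need this estimate (or an equivalent) to conclude that $A(\lambda) - B$ has closed range.
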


\begin{proof}
We follow the proof of Lemma 4 in \cite{CJM2}. Letting $P_\beta$ denote the orthogonal projection onto the boundary subspace $\beta \subset \cH$, and $P_\beta^\perp = I - P_\beta$ the complementary projection, it suffices to prove an estimate of the form
\begin{align}\label{H1estimate}
	\|u\|_{H^1(\Omega)} \leq C \left( \|u\|_{L^2(\Omega)} + \left\| P_\beta^\perp (\tr u) \right\|_{\cH} \right)
\end{align}
for all $u \in K_c(\lambda)$. Since we have assumed that $\beta = \beta_{\rm D}$ or $\beta = \beta_{\rm N}$, as defined in \eqref{betaD} and \eqref{betaN}, the boundary term is either
\[
	\left\| P_{\beta}^\perp (\tr u) \right\|_\cH = \big\| \left.u\right|_{\pO} \big\|_{\Hp} 
	\quad \text{or} \quad
	\left\| P_{\beta}^\perp (\tr u) \right\|_\cH =  \left\| \left.\frac{\p u}{\p \nu} \right|_{\pO} \right\|_{\Hm}.
\]

Now suppose $u \in K_c(\lambda)$. Letting $v=u$ in Lemma \ref{lem:trace}, we obtain
\[
	\int_\Omega \left[|\nabla u|^2 + (V-\lambda)u^2 \right] = \int_\pO u \frac{\p u}{\p \nu}.
\]
The energy estimate \eqref{H1estimate} now follows from the arithmetic--geometric mean inequality, as in \cite{CJM2}.

Finally, the fact that $\dim \mu_c(\lambda) \cap \beta = \dim \ker (\cL_c - \lambda)$ follows from the definitions of both spaces and the fact that the trace map is injective.
\end{proof}

\begin{rem}
In general, an estimate of the form \eqref{H1estimate} only implies that $\mu_c(\lambda) + \beta$ is closed and $\mu_c(\lambda) \cap \beta$ is finite dimensional. However, since $\mu_c(\lambda)$ and $\beta$ are already known to be Lagrangian (by Lemmas \ref{lem:muLag} and \ref{lem:betaLag}), we have
\[
	(\mu_c(\lambda) + \beta)^\perp = \mu_c(\lambda)^\perp \cap \beta^\perp = J(\mu_c(\lambda)) \cap J\beta = J (\mu_c(\lambda) \cap \beta).
\]
Since $J$ is an isomorphism and $\mu_c(\lambda) + \beta$ is closed, this implies $\codim(\mu_c(\lambda) + \beta) = \dim(\mu_c(\lambda) \cap \beta) < \infty$, so $\mu_c(\lambda)$ and $\beta$ are indeed a Fredholm pair.
\end{rem}

Combining Lemmas \ref{lem:muLag}, \ref{lem:betaLag} and \ref{lem:Fredholm}, we see that $\mu_c(\lambda)$ is a smooth path in $\cF \Lambda_\beta(\cH)$, so its Maslov index is well defined. In the final lemma of this section we relate this Maslov index to the Morse index of the constrained operator $\cL_c$, thus completing the proof of Theorem \ref{thm:cMas}.

\begin{lemma}\label{lemma:mono}
There exists $\lambda_\infty < 0$ such that $\mu_c(\lambda) \cap \beta = \{0\}$ for all $\lambda \leq \lambda_\infty$, and
\[
	n(\cL_c) = - \Mas \left( \mu_c\Big|_{[\lambda_\infty,0]}; \beta \right).
\]

\end{lemma}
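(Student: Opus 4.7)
The plan is to first bound $\cL_c$ below to obtain a suitable $\lambda_\infty$, then evaluate the Maslov index through a crossing-form computation and additivity. Since $\mu_c(\lambda)$ is already known to be a smooth path in $\cF\Lambda_\beta(\cH)$ by Lemmas \ref{lem:muLag}--\ref{lem:Fredholm}, the only new work is to identify each crossing with a negative eigenvalue of $\cL_c$ and pin down its contribution.

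First I would note that since $V \in L^\infty(\Omega)$, the form satisfies $D(u,u) \geq -\|V\|_{L^\infty(\Omega)} \|u\|_{L^2}^2$ on $X \cap L^2_c(\Omega)$, so $\cL_c \geq -\|V\|_{L^\infty(\Omega)}$. Any $\lambda_\infty < -\|V\|_{L^\infty(\Omega)}$ therefore lies outside $\sigma(\cL_c)$, and by Lemma \ref{lem:Fredholm}, $\mu_c(\lambda) \cap \beta = \{0\}$ for every $\lambda \leq \lambda_\infty$. Since the form domain $X \cap L^2_c(\Omega) \subset H^1(\Omega)$ embeds compactly into $L^2(\Omega)$ by Rellich--Kondrachov, $\cL_c$ has compact resolvent, and its spectrum consists of discrete eigenvalues of finite multiplicity accumulating at $+\infty$; in particular, only finitely many crossings occur in $[\lambda_\infty, 0]$.

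Next I would carry out the crossing-form calculation. Fix a crossing $\lambda_* \in [\lambda_\infty,0]$ and a vector $v_* = \tr u_* \in \mu_c(\lambda_*) \cap \beta$. Using the selfadjoint graph representation of $\mu_c$ from the proof of Lemma \ref{lem:muLag}, extend $v_*$ to a smooth path $v(\lambda) = \tr u(\lambda)$ with $u(\lambda) \in K_c(\lambda)$, and apply Green's identity \eqref{eq:Green} together with Lemma \ref{lem:reg} to obtain
\[
    \omega(\tr u, \tr \dot u) = \left<Lu, \dot u\right> - \left<L\dot u, u\right>.
\]
By Lemma \ref{lem:reg}, $Lu - \lambda u \in L^2_c(\Omega)^\perp$; differentiating in $\lambda$, so does $L\dot u - \lambda \dot u - u$. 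Since $u, \dot u \in L^2_c(\Omega)$, these orthogonal components drop out of the pairings, and setting $\lambda = \lambda_*$ gives
\[
    Q(v_*) = \omega(v, \dot v)\big|_{\lambda_*} = \lambda_* \left<u_*, \dot u_*\right> - \|u_*\|_{L^2}^2 - \lambda_* \left<\dot u_*, u_*\right> = -\|u_*\|_{L^2}^2 < 0.
\]
Thus $Q$ is negative definite on $\mu_c(\lambda_*) \cap \beta$, with signature $(0, \dim \mu_c(\lambda_*) \cap \beta)$, and every crossing is regular (hence isolated).

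Finally, assembling via additivity and the endpoint conventions of Section \ref{sec:Maslov}: each interior crossing $\lambda_* \in (\lambda_\infty, 0)$ contributes $-\dim \ker(\cL_c - \lambda_*)$ by Lemma \ref{lem:Fredholm}; the left endpoint $\lambda_\infty$ contributes $0$ since it is not a crossing; and the right endpoint $0$ contributes $n_+ = 0$ whether or not $0 \in \sigma(\cL_c)$, because $Q$ is negative definite. Summing yields
\[
    \Mas\!\left(\mu_c\Big|_{[\lambda_\infty, 0]}; \beta\right) = -\sum_{\lambda_* \in (\lambda_\infty, 0)} \dim \ker(\cL_c - \lambda_*) = -n(\cL_c),
\]
as claimed. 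The main technical point is producing a path $v(\lambda)$ along which $u(\lambda) \in K_c(\lambda)$ is genuinely $C^1$ as an $H^1$-valued function, so that $\dot u$ in the displayed formulas is rigorous; this is supplied by the smooth graph representation of $\mu_c$ from Lemma \ref{lem:muLag}, combined with the invertibility of the constrained Robin-type problem from Lemma \ref{constrainedbdry}.
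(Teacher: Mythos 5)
Your proposal is correct and follows essentially the same route as the paper: a lower bound on the form $D$ to locate $\lambda_\infty$, a crossing-form computation yielding $Q(\tr u)=-\|u\|_{L^2}^2$ (the paper phrases it via Lemma \ref{lem:trace} applied to $D(u',v)=\langle\lambda u'+u,v\rangle$, you via $\langle Lu,\dot u\rangle-\langle L\dot u,u\rangle$ with the $L^2_c(\Omega)^\perp$ components dropping out, which is the same cancellation), and the standard endpoint/additivity bookkeeping. No substantive differences.
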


\begin{proof}
We first prove the existence of $\lambda_\infty$. Suppose $\mu_c(\lambda) \cap \beta \neq \{0\}$, so the constrained eigenvalue problem has a nontrivial solution. That is, there exists $u \in H^1(\Omega) \cap L^2_c(\Omega)$ satisfying $P(L-\lambda)u =0$, with either Dirichlet or Neumann boundary conditions. It follows that
\[
	\lambda \int_\Omega u^2 = \int_\Omega \left[ |\nabla u|^2 + V u^2 \right],
\]
so $\lambda \geq \inf V$. Therefore any $\lambda_\infty < \inf V$ will suffice.

We next claim that the path $\mu_c(\lambda)$ is negative definite, in the sense that is always passes through $\beta$ in the same direction. This means the Maslov index is equal to (minus) the number of intersections of $\mu_c(\lambda)$ with $\beta$, hence
\begin{align*}
	\Mas \left( \mu_c\Big|_{[\lambda_\infty,0]}; \beta \right) = - \sum_{\lambda_\infty \leq \lambda < 0} \dim (\mu_c(\lambda) \cap \beta)
	= - \sum_{\lambda < 0} \dim (\mu_c(\lambda) \cap \beta)
	= - n(\cL_c).
\end{align*}
The second equality follows from the fact that there are no intersections for $\lambda < \lambda_\infty$, and the third equality is just the definition of the Morse index.

It only remains to prove the claimed monotonicity of $\mu_c(\lambda)$. We do this using crossing forms, as described in Section \ref{sec:Maslov}.

Suppose $u(\lambda)$ is a smooth curve in $K_c(\lambda)$, so $D(u,v) = \lambda\left<u,v\right>$ for all $v \in H^1_0(\Omega) \cap L^2_c(\Omega)$, hence $D(u',v) = \left<\lambda u' + u,v\right>$, where $'$ denotes differentiation with respect to $\lambda$. It follows from Lemma \ref{lem:trace} that
\begin{align*}
	\omega\left( \tr u, \tr u' \right) 
	&= \int_\pO \left(u \frac{\p u'}{\p \nu} - u' \frac{\p u}{\p \nu} \right) \\
	&= \big( D(u',u) - \left<\lambda u' + u, u \right> \big) - \big( D(u,u') - \lambda \left<u, u'\right> \big) \\
	&= - \int_\Omega u^2
\end{align*}
and so the path is negative definite as claimed.
\end{proof}

\subsection{The case of finite codimension}\label{sec:codim}
Before proving the constrained Morse index theorem, we show that Hypothesis \ref{hyp:bdry} is always satisfied when $L^2_c(\Omega)^\perp$ is contained in $H^1(\Omega)$ and has finite dimension.


\begin{lemma}
\label{lem:app}
If $L^2_c(\Omega)^\perp = \spn\{\phi_1, \ldots, \phi_m\}$ for functions $\phi_i \in H^1(\Omega)$, then
\[
	\gamma \left(H^1(\Omega) \cap L^2_c(\Omega) \right) = \Hp.
\]
and
\[
	\overline{H^1_0(\Omega) \cap L^2_c(\Omega)} = L^2_c(\Omega).
\]
\end{lemma}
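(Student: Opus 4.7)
The strategy is to construct a biorthogonal system in $H^1_0(\Omega)$ dual to $\{\phi_1,\ldots,\phi_m\}$ and use it as a projection device for both claims.

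\textbf{Step 1 (dual basis).} Since $\phi_1,\ldots,\phi_m$ are linearly independent in $L^2(\Omega)$, the map $T\colon L^2(\Omega)\to \mathbb{R}^m$, $T(v)=(\langle v,\phi_1\rangle,\ldots,\langle v,\phi_m\rangle)$, is surjective. Because $H^1_0(\Omega)$ is dense in $L^2(\Omega)$, the restriction $T\big|_{H^1_0(\Omega)}$ still surjects onto $\mathbb{R}^m$ (its image is a subspace of $\mathbb{R}^m$ whose $L^2$-closure is $\mathbb{R}^m$, hence equals $\mathbb{R}^m$). Therefore we can choose $\psi_1,\ldots,\psi_m\in H^1_0(\Omega)$ with $\langle \psi_i,\phi_j\rangle=\delta_{ij}$.

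\textbf{Step 2 (surjectivity of the constrained trace).} Let $h\in\Hp$. By surjectivity of the unconstrained trace map, choose $u\in H^1(\Omega)$ with $\gamma u=h$. Set
\[
	\tilde u = u - \sum_{j=1}^m \langle u,\phi_j\rangle\,\psi_j.
\]
Since each $\psi_j\in H^1_0(\Omega)$, we have $\gamma \tilde u=\gamma u=h$, and by construction $\langle \tilde u,\phi_k\rangle=0$ for every $k$, so $\tilde u\in H^1(\Omega)\cap L^2_c(\Omega)$. This proves $\gamma(H^1(\Omega)\cap L^2_c(\Omega))=\Hp$.

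\textbf{Step 3 (density).} Given $f\in L^2_c(\Omega)$, by density of $H^1_0(\Omega)$ in $L^2(\Omega)$ pick $u_n\in H^1_0(\Omega)$ with $u_n\to f$ in $L^2(\Omega)$. Define
\[
	\tilde u_n = u_n - \sum_{j=1}^m \langle u_n,\phi_j\rangle\,\psi_j \in H^1_0(\Omega).
\]
The biorthogonality gives $\langle \tilde u_n,\phi_k\rangle=0$, so $\tilde u_n\in H^1_0(\Omega)\cap L^2_c(\Omega)$. Since $f\in L^2_c(\Omega)$, we have $\langle u_n,\phi_j\rangle\to\langle f,\phi_j\rangle=0$, which together with $u_n\to f$ yields $\tilde u_n\to f$ in $L^2(\Omega)$. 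Hence $L^2_c(\Omega)\subseteq \overline{H^1_0(\Omega)\cap L^2_c(\Omega)}$, and the reverse inclusion is trivial since $L^2_c(\Omega)$ is closed.

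There is no real obstacle here; the only place that requires any care is Step 1, but the combination of linear independence of the $\phi_j$ with the density of $H^1_0(\Omega)$ in $L^2(\Omega)$ makes the construction of the dual system immediate, and the remainder of the argument is a routine projection.
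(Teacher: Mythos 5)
Your proof is correct, and it reaches the conclusion by a slightly different mechanism than the paper. Both arguments share the same skeleton: correct a given $u$ by subtracting a zero-trace function chosen to kill the inner products with the $\phi_j$. The paper builds its corrections as $\chi_\epsilon\sum\alpha_i\phi_i$, where $\chi_\epsilon$ is an interior cutoff vanishing on $\pO$, and then must invert the perturbed Gram matrix $M_{ij}(\epsilon)=\int_\Omega\chi_\epsilon\phi_i\phi_j$, which it does via dominated convergence ($M(\epsilon)\to I$ after normalizing the $\phi_i$ to be orthonormal). You instead precompute an exactly biorthogonal system $\psi_j\in H^1_0(\Omega)$ with $\langle\psi_i,\phi_j\rangle=\delta_{ij}$, using density of $H^1_0(\Omega)$ in $L^2(\Omega)$ together with the finite-dimensionality of the target to upgrade "dense image" to "surjective." This buys you a cleaner, more modular argument: the dual system is built once, both claims then follow from the same one-line oblique projection $u\mapsto u-\sum_j\langle u,\phi_j\rangle\psi_j$, and no $\epsilon$-dependent matrix inversion is needed. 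The paper's version is more explicitly constructive (the corrections are built directly from the constraint functions themselves), but gains nothing essential here. Two small points of hygiene: your Step 1 implicitly assumes the $\phi_j$ are linearly independent, which is not stated in the lemma --- as in the paper, you should say "without loss of generality" you pass to a linearly independent (or orthonormal) spanning set; and the phrase "$L^2$-closure is $\mathbb{R}^m$" should just read that the image of a dense subspace under the continuous surjection $T$ is a dense, hence closed, hence full, subspace of $\mathbb{R}^m$. Neither affects correctness.
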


%

\begin{proof}
Let $\chi_\epsilon$ be a smooth cutoff function on $\Omega$ that vanishes on the boundary and satisfies $\chi_\epsilon(x) = 1$ whenever $\operatorname{dist}(x,\pO) > \epsilon$. We assume without loss of generality that the $\{\phi_i\}$ are orthonormal.

Suppose $f \in \Hp$ is given. Since $\gamma \colon H^1(\Omega) \to \Hp$ is surjective (see, for instance, Lemma 3.37 of \cite{M00}), there exists $u \in H^1(\Omega)$ with $\gamma u = f$. Now define
\[
	u_c = u + \chi_\epsilon \sum_{i=1}^m \alpha_i \phi_i
\]
with coefficients $\alpha_1, \ldots, \alpha_n$ to be determined. Since $\chi_\epsilon$ vanishes on the boundary, $u_c$ satisfies $\gamma u_c = \gamma u = f$. Moreover, $u_c \in L^2_c(\Omega)$ if and only if
\[
	\sum_{i=1}^m \alpha_i \int_\Omega \chi_\epsilon \phi_i \phi_j = - \int_\Omega u \phi_j
\]
for each $j$. This is a linear equation for the coefficients $\{\alpha_i\}$, and will have a solution if the matrix
\[
	M_{ij}(\epsilon) = \int_\Omega \chi_\epsilon \phi_i \phi_j
\]
is invertible. The dominated convergence theorem implies
\[
	\lim_{\epsilon\to0} \int_\Omega \chi_\epsilon \phi_i \phi_j = \int_\Omega \phi_i \phi_j = \delta_{ij},
\]
hence $M_{ij}(\epsilon)$ is invertible for sufficiently small $\epsilon$.

The second claim follows from a similar construction. Suppose $u \in L^2_c(\Omega)$, so there exists a sequence $(u_k)$ in $H^1_0(\Omega)$ with $u_k \to u$ in $L^2(\Omega)$. To obtain a function in $H^1_0(\Omega) \cap L^2_c(\Omega)$, we replace each $u_k$ by
\[
	\tilde u_k = u_k + \chi_\epsilon \sum_{i=1}^m \alpha_i^k \phi_i,
\]
where $(\alpha_i^k)$ solve the linear equation
\[
	\sum_{i=1}^m M_{ij}(\epsilon) \alpha_i^k = - \int_\Omega u_k \phi_j
\]
and $\epsilon$ is chosen small enough to ensure $M_{ij}(\epsilon)$ is invertible. The fact that $u_k \to u$ implies
\[
	\int_\Omega u_k \phi_j \longrightarrow \int_\Omega u \phi_j = 0,
\]
for each $j$, hence $\alpha_i^k \to 0$ as $k \to \infty$. It follows that $\tilde u_k - u_k \to 0$, and so $\tilde u_k \to u$.
\end{proof}


\section{The constrained Morse index theorem}\label{sec:MM}
Now consider a one-parameter family of domains $\{\Omega_t\}_{a \leq t \leq b}$ in $\bbR^n$. For simplicity we will assume that each $\Omega_t$ has smooth boundary, that the domains are varying smoothly in time, and that the domains are increasing, in the sense that $\Omega_s \subset \Omega_t$ for $s<t$. See \cite[\S 2.2]{CJM14} for a description of the nonsmooth case (in the unconstrained problem).

The idea is to define a Maslov index with respect to the $t$ parameter, then use a homotopy argument to relate this to the Maslov index defined in Section \ref{sec:cMaslov}, and hence to the Morse index of the constrained operator. There is some freedom in how one chooses the constraints on $\Omega_t$ in relation to the original constraints on $\Omega$. Our choice is motivated by the requirement that the resulting path be monotone in $t$, which is necessary for the proof of Theorem \ref{thm:Morse}.

\subsection{The general index theorem}
First, we must describe the domain of the constrained operator on $\Omega_t$. Let $E_t \colon L^2(\Omega_t) \to L^2(\Omega)$ denote the operator of extension by zero, and define
\begin{align}
	L^2_c(\Omega_t) = \left\{u \in L^2(\Omega_t) : E_t u \in L^2_c(\Omega) \right\}.
\end{align}
In other words, $L^2_c(\Omega_t)$ consists of function whose extension by zero satisfies the constraints on the larger domain $\Omega$. To motivate this, suppose $L^2_c(\Omega) = \{\phi\}^\perp$ for some function $\phi$. Then for any function $u \in L^2(\Omega_t)$ we have
\[
	u \in L^2_c(\Omega_t) \ \Longleftrightarrow \ E_t u \in L^2_c(\Omega) \ \Longleftrightarrow \ \int_\Omega (E_t u)\phi = 0 
	\ \Longleftrightarrow \ \int_{\Omega_t} u \phi = 0,
\]
and so $L^2_c(\Omega_t) = \{ \phi|_{\Omega_t}\}^\perp$. We then define $\cL^t_c$ to be the selfadjoint operator corresponding to the bilinear form \eqref{Ddef} with form domain $H^1(\Omega_t) \cap L^2_c(\Omega_t)$ (for the Neumann problem) or $H^1_0(\Omega_t) \cap L^2_c(\Omega_t)$ (for the Dirichlet problem). Our index theorem computes the spectral flow of the family $\{\cL_c^t\}$, i.e. the difference in Morse indices, $n(\cL_c^b) - n(\cL_c^a)$. To describe this, it is convenient to reformulate the problem in terms of a $t$-dependent family of bilinear forms on a fixed domain.

To that end, we define the bilinear form
\begin{align}\label{Dtdef}
	D_t(u,v) = \int_{\Omega_t} \left[ \nabla(u \circ \varphi_t^{-1}) \cdot \nabla(v \circ \varphi_t^{-1}) + V (u \circ \varphi_t^{-1}) (v \circ \varphi_t^{-1}) \right]
\end{align}
for $u,v \in H^1(\Omega)$, and define the subspace
\[
	L^2_{c,t}(\Omega) = \{u \circ \varphi_t : u \in L^2_c(\Omega_t)\} \subset L^2(\Omega).
\]
Suppose $\phi \in L^2_c(\Omega)^\perp$. Then for any $u \in L^2_{c,t}(\Omega)$ we have $u \circ \varphi_t^{-1} \in L^2_c(\Omega_t)$, hence
\[
	0 = \int_{\Omega_t} (u \circ \varphi_t^{-1}) \phi = \int_\Omega u (\phi \circ \varphi_t) \det(D \varphi_t).
\]
In other words, the rescaled constraint space in $L^2(\Omega)$ is
\begin{align}
	L^2_{c,t}(\Omega)^\perp = \left\{ \det(D \varphi_t)(\phi \circ \varphi_t)  : \phi \in L^2_c(\Omega)^\perp \right\}.
\end{align}
This explicit description of the rescaled constraint functions will be used below in the crossing form calculation for the Dirichlet problem.

There is a formal differential operator $L_t$, and a boundary operator $B_t$, so that a version of Green's first identity
\[
	D_t(u,v) = \left<L_t u, v\right> + \int_{\pO} (B_t u)v
\]
holds if we additionally assume that $L_t u \in L^2(\Omega)$. We thus define the space of weak solutions to the (rescaled) constrained problem
\[
	K_c(\lambda,t) = \{u \in H^1(\Omega) \cap L^2_{c,t}(\Omega) : D_t(u,v) = \lambda \left<u,v\right> \text{ for all } v \in H^1_0(\Omega) \cap L^2_{c,t}(\Omega) \},
\]
and the space of Cauchy data
\[
	\mu_c(\lambda,t) = \{\tr_t u : u \in K_c(\lambda,t) \},
\]
using the rescaled trace map
\[
	\tr_t u := \left.\left( u, B_t u \right) \right|_{\pO}.
\]

\begin{theorem}
Let $\{\Omega_t\}$ be a smooth increasing family of domains, defined for $a \leq t \leq b$. If the family of subspaces $H^1(\Omega) \cap L^2_{c,t}(\Omega)$ is smooth, then
\begin{align}\label{eq:MorseMaslov}
	n(\cL_c^a) - n(\cL_c^b) = \Mas\left(\mu_c(0,\cdot)\Big|_{[a,b]}; \beta \right).
\end{align}
\end{theorem}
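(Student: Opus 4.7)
The strategy is the standard homotopy/rectangle argument, now implemented in the constrained setting. I would introduce the two-parameter family of Cauchy data spaces $\mu_c(\lambda,t)$ for $(\lambda,t)$ in the closed rectangle $R = [\lambda_\infty,0]\times[a,b]$, and exploit the homotopy invariance of the Maslov index: the Maslov index of the (loop given by the) boundary $\partial R$ vanishes. I would then identify the contributions from the four sides of $\partial R$ with the quantities appearing in \eqref{eq:MorseMaslov} (and with zero, on one side).

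The first step is to verify that $(\lambda,t)\mapsto\mu_c(\lambda,t)$ is a smooth map into $\cF\Lambda_\beta(\cH)$. Smoothness in $\lambda$ with $t$ fixed is Lemma \ref{lem:muLag}; the Fredholm property at each $(\lambda,t)$ and the identification $\dim(\mu_c(\lambda,t)\cap\beta)=\dim\ker(\cL_c^t-\lambda)$ is Lemma \ref{lem:Fredholm}. To get joint smoothness I would redo the Robin--to--Robin graph construction of Lemma \ref{lem:muLag} with the $t$-dependent form $D_t$ and the $t$-dependent trace map $\tr_t$: given $(\lambda_0,t_0)$, Lemma \ref{constrainedbdry} produces a $\zeta_0$ so that the constrained Robin problem for $D_{t_0}-\lambda_0$ is invertible, and this invertibility persists on a neighbourhood of $(\lambda_0,t_0)$ by standard perturbation theory. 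The hypothesis that the family $H^1(\Omega)\cap L^2_{c,t}(\Omega)$ varies smoothly in $t$ is exactly what is needed for the resulting operator $A(\lambda,t)\colon\rho\to\rho$ to be jointly smooth, hence so is its graph.

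The second step is the ``bottom'' edge. I would show that $\lambda_\infty$ may be chosen \emph{uniformly} in $t\in[a,b]$ so that $\mu_c(\lambda_\infty,t)\cap\beta=\{0\}$ for all $t$. The proof mirrors Lemma \ref{lemma:mono}: any nonzero intersection produces $u\in H^1(\Omega)\cap L^2_{c,t}(\Omega)$ with
\[
\lambda\int_\Omega u^2 = D_t(u,u) = \int_{\Omega_t}\bigl[|\nabla(u\circ\varphi_t^{-1})|^2 + V\,(u\circ\varphi_t^{-1})^2\bigr] \ \ge\ \bigl(\inf V\bigr)\int_\Omega u^2,
\]
after the change of variables back to $\Omega_t$, so any $\lambda_\infty<\inf_\Omega V$ works for every $t\in[a,b]$. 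Consequently the Maslov index of $t\mapsto\mu_c(\lambda_\infty,t)$ on $[a,b]$ is zero.

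The third step assembles the pieces. Applying Theorem \ref{thm:cMas} on each vertical edge gives $\Mas(\mu_c(\cdot,a);\beta)=-n(\cL_c^a)$ and $\Mas(\mu_c(\cdot,b);\beta)=-n(\cL_c^b)$, while the ``top'' edge $\mu_c(0,\cdot)$ is the quantity to be computed. Homotopy invariance applied to $\partial R$ (traversed counterclockwise, with appropriate sign on each edge) yields
\[
-n(\cL_c^a) + \Mas\bigl(\mu_c(0,\cdot)\big|_{[a,b]};\beta\bigr) + n(\cL_c^b) + 0 = 0,
\]
which is \eqref{eq:MorseMaslov}. I expect the main technical obstacle to be the joint smoothness in step one: extending the Robin-to-Robin construction requires careful control of how the rescaled constraint space $L^2_{c,t}(\Omega)$, the trace map $\tr_t$, and the associated right inverse of the constrained Dirichlet trace all depend on $t$, which is where the smoothness hypothesis on $H^1(\Omega)\cap L^2_{c,t}(\Omega)$ is essential. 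A secondary subtlety is that, although monotonicity in $\lambda$ was used in Lemma \ref{lemma:mono}, no monotonicity in $t$ is required here (the path $\mu_c(0,\cdot)$ is allowed to have crossings of either sign); all that is needed is the transversality at $\lambda=\lambda_\infty$, which the uniform bound supplies.
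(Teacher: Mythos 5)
Your proposal is correct and follows essentially the same route as the paper: the same rectangle homotopy argument on $[\lambda_\infty,0]\times[a,b]$, with the vertical edges evaluated via the monotonicity computation of Lemma \ref{lemma:mono}, the bottom edge killed by a uniform choice of $\lambda_\infty < \inf V$, and joint smoothness obtained from the smoothness hypothesis on $H^1(\Omega)\cap L^2_{c,t}(\Omega)$ together with the Robin-to-Robin construction of Lemma \ref{lem:muLag}. The paper phrases the smoothness step slightly more compactly, by pulling the forms $D_t$ back to the fixed domain $H^1(\Omega)\cap L^2_c(\Omega)$ via the operators $T_t$ and invoking the theory of \cite{CJM14}, but the content is the same.
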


In other words, the Maslov index computes the spectral flow of the constrained family $\{\cL^t_c\}$. The smoothness assumption means there is a smooth family of $H^1$-bounded operators, $T_t \colon H^1(\Omega) \cap L^2_c(\Omega) \to H^1(\Omega)$, with $\operatorname{ran}(T_t) = H^1(\Omega) \cap L^2_{c,t}(\Omega)$. 

\begin{proof}
The proof is a standard application of the homotopy invariance of the Maslov index. The space $K_c(\lambda,t)$ of weak solutions is defined in terms of the form $D_t$ on $H^1(\Omega) \cap L^2_{c,t}(\Omega)$. Using the smoothness assumption, this is equivalent to the form $D_t \circ T_t$ on the fixed ($t$-independent) domain $H^1(\Omega) \cap L^2_c(\Omega)$. Since $D_t \circ T_t$ is a smooth family of forms, we can use the theory developed in \cite{CJM14} (which is reviewed in the proof of Lemma \ref{lem:muLag}), to see that
\[
	\mu_c \colon [\lambda_\infty,0] \times [a,b] \longrightarrow \cF \Lambda_\beta(\cH)
\]
is a smooth two-parameter family of Lagrangian subspaces.

This means the image under $\mu_c$ of the boundary of $[\lambda_\infty,0] \times [a,b]$ is null-homotopic, hence its Maslov vanishes. Summing the four sides of the boundary with the appropriate orientation, we obtain
\[
	\Mas\left(\mu(\cdot,a)\big|_{[\lambda_\infty,0]}\right) + \Mas\left(\mu(0,\cdot)\big|_{[a,b]}\right) = \Mas\left(\mu(\lambda_\infty,\cdot)\big|_{[a,b]}\right) + \Mas\left(\mu(\cdot,b)\big|_{[\lambda_\infty,0]}\right).
\]
The monotonicity computation in Lemma \ref{lemma:mono} shows that
\[
	\Mas\left(\mu(\cdot,t)\big|_{[\lambda_\infty,0]}\right) = -n(\cL_c^t)
\]
for any $t \in [a,b]$, and the choice of $\lambda_\infty$ implies that
\[
	\Mas\left(\mu(\lambda_\infty,\cdot)\big|_{[a,b]}\right) = 0.
\]
This completes the proof.
\end{proof}

\subsection{The Dirichlet crossing form}
We now complete the proof of Theorem \ref{thm:Morse} by computing the right-hand side of \eqref{eq:MorseMaslov} when $\beta$ is the Dirichlet subspace. This closely follows the crossing form computation in \cite[\S 5]{CJM14}. In particular, it suffices to prove that the crossing form is negative definite at any crossing.

Let $\{u_t\}$ be a smooth family of solutions to the constrained problem with $\lambda=0$, i.e. $\tr u_t \in K_c(0,t)$. This means $\int_\Omega u_t \phi_t = 0$ and $L_t u_t \propto \phi_t$, where
\begin{align}\label{def:phit}
	\phi_t = \det(D \varphi_t) (\phi \circ \varphi_t)
\end{align}
is the rescaled constraint function. More concretely, we can write $L_t u_t = a_t \phi_t$, where $a_t$ depends only on $t$, so that
\[
	D_t(u_t,v) = a_t \left<\phi_t , v\right> + \int_{\pO} (B_t u_t) v
\]
for all $v \in H^1(\Omega)$.

Letting $v = u_t'$, we obtain
\[
	D_t(u_t,u_t') = a_t \left<\phi_t , u_t'\right> + \int_{\pO} (B_t u_t) u_t'.
\]
On the other hand, differentiating with respect to $t$ and then plugging in $v = u_t$, we obtain
\[
	D_t'(u_t,u_t) + D_t(u_t',u_t) = a_t' \left<\phi_t , u_t\right> + a_t \left<\phi_t' , u_t\right> + \int_{\pO} (B_t u_t)' u_t.
\]
Therefore, the crossing form is
\begin{align}\label{Dcf1}
	Q(\tr_t u_t) = \omega( (\tr_t u_t), (\tr_t u_t)') = \int_{\pO} (B_t u_t)' u_t - (B_t u_t) u_t' 
	= D_t'(u_t,u_t) - 2 a_t \left<\phi_t' , u_t\right>,
\end{align}
where we have used the fact that $\left<u_t,\phi_t\right> = 0$, hence $\left<\phi_t, u_t'\right> = - \left<\phi_t', u_t\right>$.

To complete the computation we must find $D_t'$. Differentiating $D_t(u,u)$ for a fixed $u \in H^1(\Omega)$, we have
\begin{align}\label{Dtprime}
	D_t'(u,u) =  - 2D_t \left(u, \nabla_X (u \circ \varphi_t^{-1}) \circ \varphi_t \right) + \int_{\p\Omega_t} \left[ \left| \nabla(u \circ \varphi_t^{-1})\right|^2 + V \left|u \circ \varphi_t^{-1} \right|^2 \right] (X \cdot \nu_t).
\end{align}
Here, $X = \varphi_t'$ and hence we have used the fact that
\[
	\frac{d}{dt} (u \circ \varphi_t^{-1}) = -\nabla_X (u \circ \varphi_t^{-1})
\]
obtained by writing
\[
	0 = \frac{d}{dt} (u \circ \varphi_t^{-1} \circ \varphi_t) = \frac{d}{dt} (u \circ \varphi_t^{-1}) \circ \varphi_t + \nabla_X(u \circ \varphi_t^{-1}) \circ \varphi_t.
\]
We now assume that $t$ is a crossing time, so $\tr_t u_t \in \beta$. Evaluating the first term on the right-hand side of \eqref{Dtprime} at $u = u_t \in H^1_0(\Omega)$, and defining $\widehat u = u_t \circ \varphi_t^{-1}$, we obtain
\begin{align*}
	D_t \left(u_t, \nabla_X (u_t \circ \varphi_t^{-1}) \circ \varphi_t \right)  &= D_t(u_t, (\nabla_X \widehat u) \circ \varphi_t) \\
	&= \left<L_t u_t, (\nabla_X \widehat u) \circ \varphi_t \right> + \int_{\pO} (B_t u_t) (\nabla_X \widehat u) \circ \varphi_t \\
	&= a_t \left<\phi_t, (\nabla_X \widehat u) \circ \varphi_t \right> + \int_{\pOt} \left(\frac{\p \widehat u}{\p \nu_t}\right)^2 (X \cdot \nu_t).
\end{align*}
Since $\widehat u$ vanishes on $\pOt$, the boundary term in \eqref{Dtprime} simplifies to
\begin{align*}
		\int_{\p\Omega_t} \left[ \left| \nabla \widehat u\right|^2 + V |\widehat u|^2 \right] (X \cdot \nu_t) = \int_{\pOt} \left(\frac{\p \widehat u}{\p \nu_t}\right)^2 (X \cdot \nu_t)
\end{align*}
and so
\begin{align}\label{Dtprime2}
	D_t'(u_t,u_t) = - 2 a_t \left<\phi_t, (\nabla_X \widehat u) \circ \varphi_t\right> - \int_{\pOt} \left(\frac{\p u}{\p \nu_t}\right)^2 (X \cdot \nu_t) \, d\mu_t .
\end{align}
Combining this with \eqref{Dcf1}, we find that
\begin{align}\label{Dcross1}
	Q(\tr_t u_t) = - 2 a_t \left<\phi_t, (\nabla_X \widehat u) \circ \varphi_t\right> - 2 a_t \left<\phi_t' , u_t\right> - \int_{\pOt} \left(\frac{\p \widehat u}{\p \nu_t}\right)^2 (X \cdot \nu_t)  .
\end{align}

This expression for the crossing form is generally valid, in the sense that it holds for any smooth family of constraint functions $\{\phi_t\}$. We now show that our choice of $\phi_t$ is such that the first two terms on the right-hand side cancel, resulting in a form that is sign definite.

Differentiating \eqref{def:phit}, we obtain
\begin{align*}
	\phi_t' = \det(D\varphi_t) \dv(X) (\phi \circ \varphi_t) + \det(D\varphi_t) (\nabla_X \phi) \circ \varphi_t = \det(D\varphi_t) \dv(\phi X) \circ \varphi_t.
\end{align*}
On the other hand, we can use the divergence theorem, together with the fact that $\widehat u$ vanishes on $\pOt$, to write
\begin{align*}
	\left<\phi_t, (\nabla_X \widehat u) \circ \varphi_t\right> &= \int_\Omega \det(D \varphi_t) (\phi \circ \varphi_t)  (\nabla_X \widehat u) \circ \varphi_t \\
	&= \int_{\Omega_t} \phi \nabla_X \widehat u \\
	&= - \int_{\Omega_t} \widehat u \dv(\phi X) \\
	&= - \int_\Omega u_t \det(D \varphi_t) \dv(\phi X) \circ \varphi_t  \\
	&= - \left<u_t, \phi_t' \right>.
\end{align*}
Thus the first two terms on the right-hand side of \eqref{Dcross1} cancel, and the crossing form simplifies to
\begin{align}\label{Dcross2}
	Q(\tr u_t) = - \int_{\pOt} \left(\frac{\p \widehat u}{\p \nu_t}\right)^2 (X \cdot \nu_t).
\end{align}

\begin{rem}
The above computation readily generalizes to any number of constraints. The constrained equation becomes $L_t u_t = \sum a_t^i \phi_t^i$, where $\{\phi_t^i\}$ are the rescaled constraint functions, and the right-hand side of \eqref{Dcross1} simply becomes
\[
 - 2 \sum a_t^i \left( \left<\phi^i_t, (\nabla_X \widehat u) \circ \varphi_t\right> + \left<(\phi^i_t)' , u_t\right> \right) - \int_{\pOt} \left(\frac{\p \widehat u}{\p \nu_t}\right)^2 (X \cdot \nu_t) = - \int_{\pOt} \left(\frac{\p \widehat u}{\p \nu_t}\right)^2 (X \cdot \nu_t),
\]
where each of the terms in brackets vanishes, as in the case of a single constraint.
\end{rem}


\section{An application to the Nonlinear Schr\"odinger equation}
\label{sec:NLS}
As a final application of Theorem \ref{thm:Morse}, we study the stability of the ground state solution of the nonlinear Schr\"odinger equation \eqref{eq:schr}. Under a mild condition on $f$ and $\omega$ (see, for instance, \cite[Appendix]{BL80}), there exists an even, positive solution $\phi$ to
\begin{align}\label{eq:ss}
	\phi_{xx} + f(\phi^2)\phi + \omega\phi = 0, \quad \lim_{x\to\infty} \phi(x) = 0
\end{align}
that is decreasing for $x>0$. It follows that $L_- \phi = 0$, hence $n(L_-) = 0$. We are interested in computing the Morse index of $L_+$, constrained to $(\ker L_-)^\perp = \{\phi\}^\perp$. We do this by applying Theorem \ref{thm:Morse} to the semi-infinite domain $\Omega_t = (-\infty,t)$.

\begin{rem}
The following computation is not, strictly speaking, an application of Theorem \ref{thm:Morse}, which was only proved for bounded domains. A Morse--Maslov index theorem for semi-infinite domains recently appeared in \cite{BCJLMS17}. Instead of focusing on technical details, we simply compute the conjugate points, and observe that a formal application of Theorem \ref{thm:Morse} yields the well-known stability criterion of Vakhitov and Kolokolov.
\end{rem}

Differentiating \eqref{eq:ss} with respect to $x$, we obtain $L_+(\phi_x) = 0$. Moreover, assuming the map $\omega \mapsto \phi$ is $C^1$, we also have $L_+ (\phi_\omega) = \phi$. The homogeneous equation $L_+u = 0$ has one linearly independent solution that decays as $x \to -\infty$, namely $\phi_x$, and so any solution to the inhomogeneous equation $L_+ u = \phi$ that decays at $-\infty$ is of the form
\[
	u = A \phi_x + \phi_\omega
\]
for some $A \in \bbR$. The constraint on $(-\infty,t)$ is
\begin{align}\label{eq:NLSconstraint}
	0 = \int_{-\infty}^t u\phi = \int_{-\infty}^t \left( A \phi \phi_x + \phi \phi_\omega \right) = \frac12 \left( A \phi^2(t) + \frac{\partial}{\partial\omega} \int_{-\infty}^t \phi^2(x)\, dx \right)
\end{align}
and the Dirichlet boundary condition at $t$ is
\begin{align}\label{eq:NLSboundary}
	A\phi_x(t) + \phi_\omega(t) = 0.
\end{align}
By definition, $t \in \bbR$ is a conjugate point when both \eqref{eq:NLSconstraint} and \eqref{eq:NLSboundary} are satisfied.

Now define
\[
	g(u) = u^{-1} \int_0^u f(v)\,dv, \quad u > 0,
\]
so that $u g'(u) + g(u) = f(u)$. It follows that $\phi_x^2 + [\omega + g(\phi^2)] \phi^2$ is constant for any solution to \eqref{eq:ss}. Since $\phi \in H^1(\bbR)$, we have $\phi_x^2 + [\omega + g(\phi^2)] \phi^2 = 0$. Differentiating this equation with respect to $\omega$, we find that
\begin{align}\label{eq:phix}
	\phi_x \phi_{x\omega} + g'(\phi^2) \phi^3 \phi_\omega + \frac12 \phi^2 + [\omega + g(\phi^2)] \phi \phi_\omega = 0.
\end{align}
Since $\phi$ is positive, this implies $\phi_x$ and $\phi_\omega$ do not simultaneously vanish. Together with \eqref{eq:NLSboundary}, this implies $\phi_x(t) \neq 0$ if $t$ is a conjugate point, hence $A = -\phi_\omega(t)/\phi_x(t)$. Substituting this into \eqref{eq:NLSconstraint}, we find that $t$ is a conjugate point precisely when it is a root of the function
\begin{align}
	c(t) = - \frac{\phi^2(t) \phi_\omega(t)}{\phi_x(t)} + \frac{\partial}{\partial\omega} \int_{-\infty}^t \phi^2(x)\,dx.
\end{align}
This function is plotted in Figure~\ref{fig:c} for the power law $f(\phi^2) = \phi^{2p}$. In this case it is easily verified that there is a conjugate point if and only if $p>2$.


\begin{figure}[!tbp]
	\begin{subfigure}[b]{0.3\textwidth}
		\includegraphics[width=\textwidth]{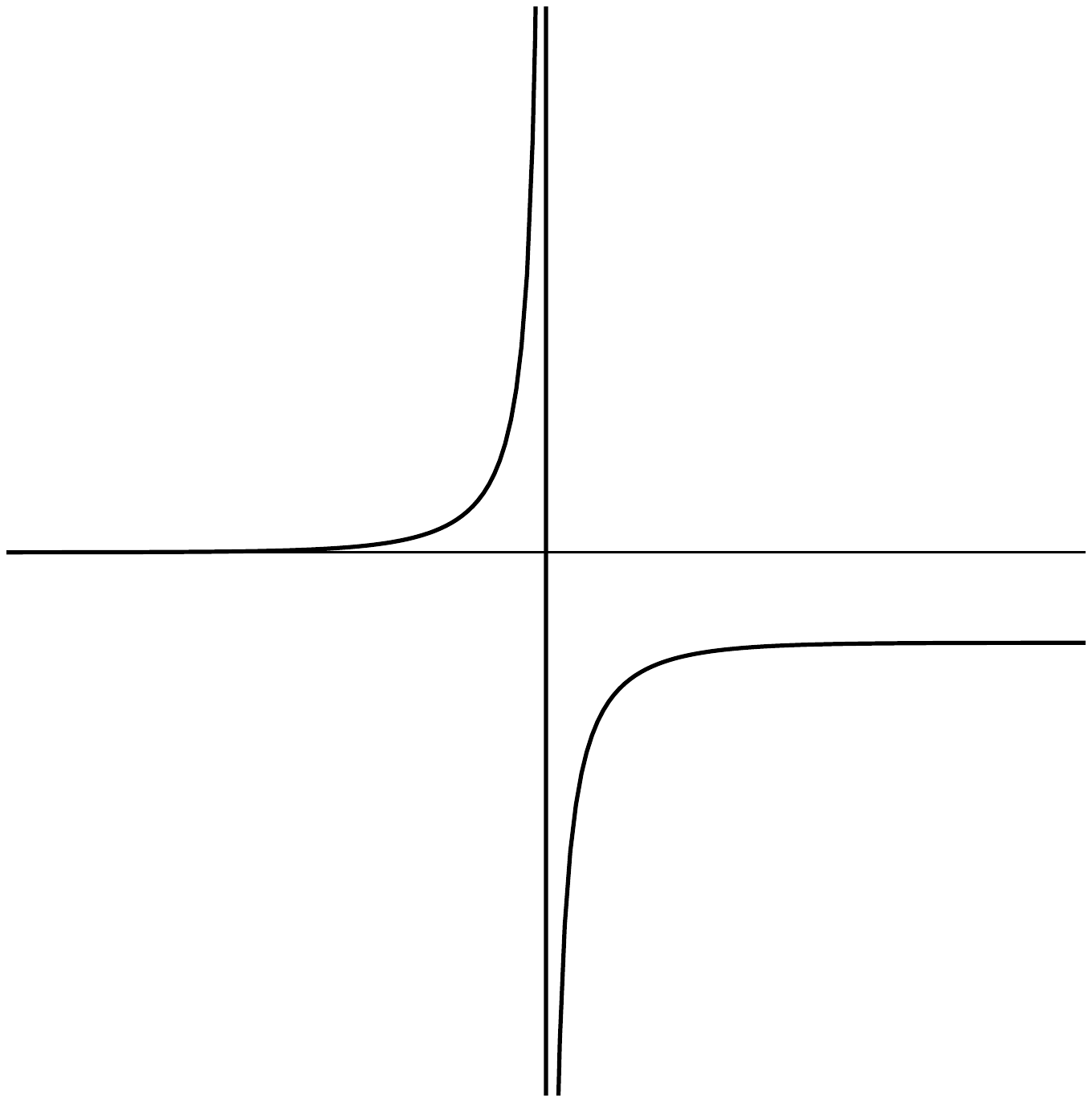}
		\caption{$p=1$}
	\end{subfigure}
	\begin{subfigure}[b]{0.3\textwidth}
		\includegraphics[width=\textwidth]{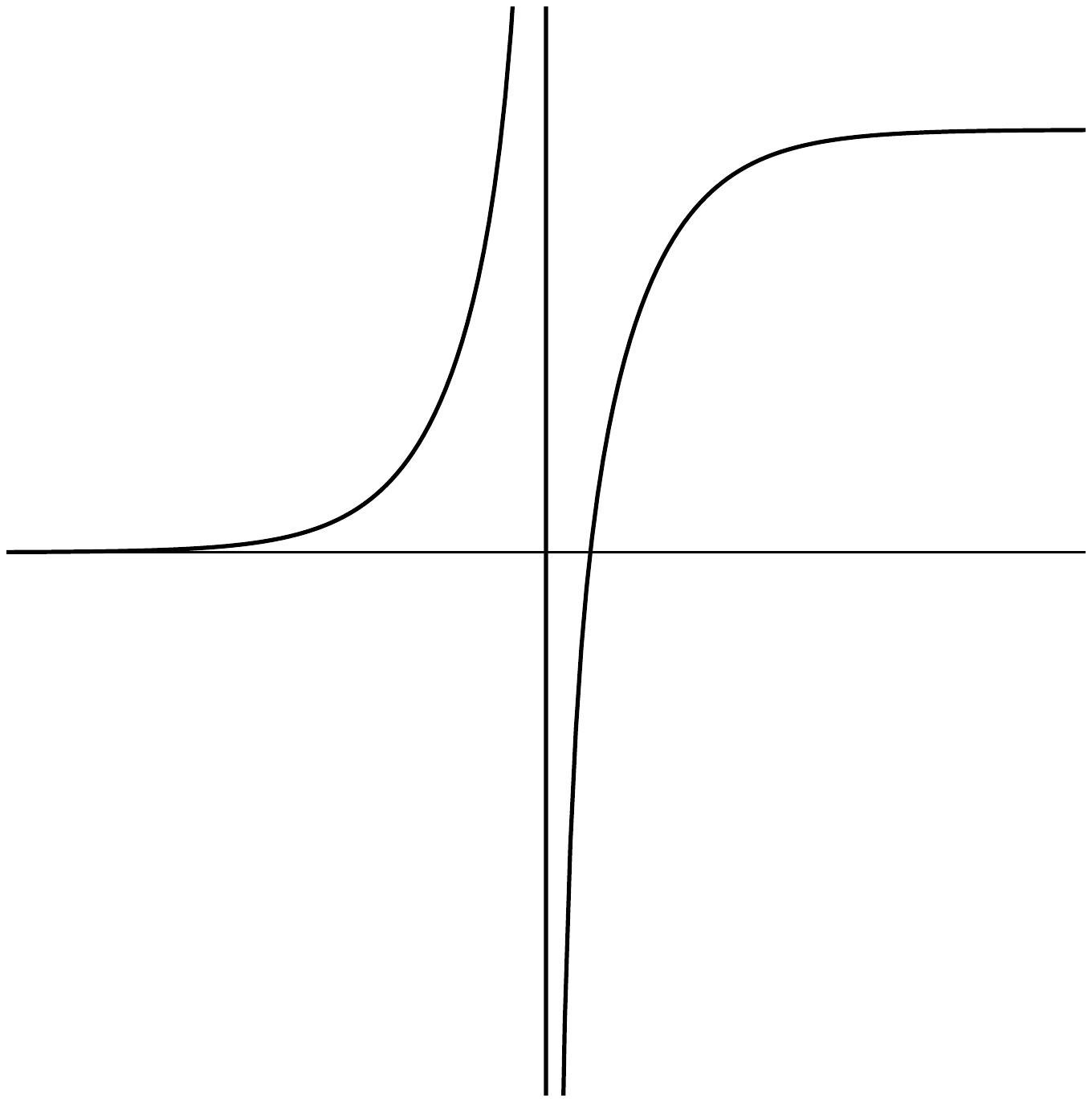}
		\caption{$p=3$}
	\end{subfigure}
\caption{The function $c(t)$ for the power-law nonlinearity $f(\phi^2) = (p+1)\phi^{2p}$.}
\label{fig:c}
\end{figure}

It is not difficult to verify that the function $c$ has the following properties:
\begin{enumerate}
\item $\displaystyle{\lim_{t\to -\infty} c(t) = 0}$;
\item $\displaystyle{\lim_{t\to 0-} c(t) = \infty}$;
\item $\displaystyle{\lim_{t\to 0+} c(t) = -\infty}$;
\item $\displaystyle{\lim_{t\to\infty} c(t) = \frac{\partial}{\partial\omega} \int_{-\infty}^\infty \phi^2(x)\, dx}$;
\item $c'(t) > 0$ for $t \neq 0$.
\end{enumerate}

In particular, for (v) we differentiate to obtain
\begin{align*}
	c'(t) = \phi^2 \left( \frac{\phi_\omega \phi_{xx} - \phi_{\omega x} \phi_x}{\phi_x^2} \right) \bigg|_{x=t}
\end{align*}
for $t \neq 0$. Using \eqref{eq:ss} and \eqref{eq:phix} to compute $\phi_{xx}$ and $\phi_{x\omega}$ we find that
\begin{align*}
	\phi_\omega \phi_{xx} - \phi_{\omega x} \phi_x 
	&= \frac12 \phi^2.
\end{align*}
Since $\phi>0$, this implies $c'(t) > 0$.

It follows immediately that there are no conjugate points in $(-\infty,0)$, and there is a single conjugate point in $(0,\infty)$ if and only if the ``slope" $\partial_\omega \int \phi^2$ is positive.


\bibliographystyle{amsplain}
\bibliography{maslov}
 
\end{document}